\newtheorem{thm}{Theorem}
\newtheorem{cor}{Corollary}
\newtheorem{lem}{Lemma}
\newtheorem{prop}{Proposition}
\newtheorem{conjec}{Conjecture}
\newtheorem{defn}{Definition}
\newcommand{\w}{\omega}
\newcommand{\ra}{\rightarrow}
\newcommand{\sub}{\subseteq}
\newcommand{\mo}{$L=x_1x_2...x_n$}
\newcommand{\bks}{\backslash}
\newcommand{\degr}{$d^+_D(v)\leq d^{++}_D(v)$}
\begin{document}

\begin{center}\Large \textbf{A Remark on the Second Neighborhood Problem}
\end{center}
\begin{center}\Large Salman Ghazal
\end{center}

\begin{abstract}
Seymour's second neighborhood conjecture states that every simple digraph (without digons) has a vertex whose first out-neighborhood is at most as large as its second out-neighborhood. Such a vertex is said to have the second neighborhood property (SNP). We define "good" digraphs and prove a statement that implies that every feed vertex of a tournament has the SNP. In the case of digraphs missing a matching, we exhibit a feed vertex with the SNP by refining a proof due to Fidler and Yuster and using good digraphs. Moreover, in some cases we exhibit two vertices with SNP.
\end{abstract}

\section{Introduction}
\label{intro}
In this paper, a digraph $D$ is a couple of two sets $(V, D)$, where $E\sub V\times V$. $V$ and $E$ are the vertex set and edge set of $D$ and denoted by $V(D)$ and $E(D)$ respectively. An oriented graph is a digraph that contains neither loops nor digons.
If $K\subseteq V(D)$ then the induced restriction of $D$ to $K$ is denoted by $D[K]$. As usual, $N^{+}_D(v)$ (resp. $N^{-}_D(v)$)
denotes the (first) out-neighborhood (resp. in-neighborhood) of a vertex $v\in V$. $N^{++}_D(v)$ (resp. $N^{--}_D(v)$) denotes the second
\textit{out-neighborhood} (\textit{in-neighborhood}) of $v$, which is the set of vertices that are at distance 2 from $v$ (resp. to $v$). We also denote
$d^{+}_D(v)=|N^{+}_D(v)|$, $d^{++}_D(v)=|N^{++}_D(v)|$, $d^{-}_D(v)=|N^{-}_D(v)|$ and $d^{--}_D(v)=|N^{--}_D(v)|$.
We omit the subscript if the digraph is clear from the context.
For short, we write $x\rightarrow y$ if the arc $(x,y)\in E$.
A vertex $v\in V(D)$ is called \textit{whole} if $d(v):= d^{+}(v) + d^{-}(v)=|V(D)|-1$, otherwise $v$ is non whole. A \textit{sink} $v$ is a vertex with $d^{+}(v)=0$.
For $x,y\in V(D)$, we say $xy$ is a \textit{missing edge} of $D$ if neither $(x,y)$ nor $(y,x)$ are in $E(D)$.
The\textit{ missing graph} $G$ of $D$ is
the graph whose edges are the missing edges of $D$ and whose vertices are the non whole vertices of $D$.
In this case, we say that $D$ is \textit{missing} $G$. So, a tournament does not have missing edges.\\

\par A vertex $v$ of $D$ is said to have the \textit{second neighborhood property} (SNP) if \degr 
 . In 1990, Seymour conjectured the following:\\

\begin{conjec}
\textbf{(Seymour's Second Neighborhood Conjecture (SNC))}\cite{dean}
\hspace{5pt}Every oriented graph has a vertex with the SNP.
\end{conjec}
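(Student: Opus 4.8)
The statement to be established is Seymour's conjecture in full generality, which is a long-standing open problem; accordingly I describe the line of attack I would pursue, building from the cases that the good-digraph and feed-vertex machinery of this paper can reach, and I indicate exactly where I expect the argument to stall.

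The natural base case is a tournament, where the conjecture is a theorem: every tournament has a vertex with the SNP. The cleanest route is the median-order argument of Havet and Thomass\'e. Take a linear order $v_1,\dots,v_n$ maximizing the number of forward arcs; its defining \emph{feedback} property is that for all $i<j$, $v_i$ dominates at least half of $\{v_{i+1},\dots,v_j\}$. A short interval-counting argument then shows that the last vertex $v_n$ (a feed vertex of the order) satisfies $d^+(v_n)\leq d^{++}(v_n)$. My first step would be to recast this in the paper's language, so that ``$v_n$ is a feed vertex of a good digraph'' becomes the statement that yields the SNP, turning the tournament result into the base case of an induction on the missing graph $G$.

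For a general oriented graph I would induct on $|E(G)|$. When $E(G)=\emptyset$ we have a tournament and are done. For the inductive step, choose a missing edge $xy$ and attempt a local move: either orient it as $(x,y)$ or $(y,x)$ in a way that the SNP of the denser digraph transfers back to $D$, or delete or contract a vertex so that a digraph with strictly fewer missing edges carries a vertex whose SNP is preserved. The matching case, namely the Fidler--Yuster result refined here with good digraphs, is precisely the situation where the missing edges form an independent set, so these local moves at distinct missing edges do not interfere; the abstract's ``two vertices with the SNP'' is the kind of slack one hopes to propagate through the induction.

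The hard part, and the reason the conjecture remains open, is that this induction does not close for an arbitrary $G$. Adding an arc to eliminate a missing edge can destroy a vertex's SNP, since both $d^+$ and $d^{++}$ move and not monotonically; and once the missing edges share vertices, anything beyond a matching, the local augmentations at different missing edges conflict, with no known global weighting or median-order surrogate that controls all of them simultaneously. So while the tournament base case and the matching case are within reach of the feed-vertex and good-digraph framework, the statement as worded requires a genuinely new global invariant, and I would expect any such attempt to break down exactly at the step of reconciling overlapping missing edges.
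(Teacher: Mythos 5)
The statement you were asked about is not a theorem of this paper: it is Seymour's conjecture, stated as a \texttt{conjec} environment precisely because it is open, and the paper offers no proof of it (only partial results for tournaments and for oriented graphs missing a matching). So there is no ``paper's own proof'' to compare against, and your proposal, by its own admission, is not a proof either. The concrete gap is the inductive step you sketch: you propose to induct on the number of missing edges, orienting or removing a missing edge $xy$ and transferring the SNP back from the denser digraph to $D$, but you supply no mechanism for that transfer, and none is known. Adding an arc $(x,y)$ can strictly increase $d^+(v)$ for $v=x$ and can destroy paths of length two elsewhere, so neither $d^+$ nor $d^{++}$ behaves monotonically under the move; this is exactly why the paper's own technique (convenient orientations of good missing edges, the dependency digraph $\Delta$, and the interval sets $K(\xi)$) only closes when the components of $\Delta$ have very restricted structure, as happens for a matching where $\Delta$ decomposes into disjoint paths and cycles (Lemma 4 of the paper). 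For a general missing graph the sets $K(\xi)$ need not be intervals of $D$, the digraph need not be good, and Theorem 1 gives no purchase.

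One smaller inaccuracy worth flagging: your gloss of the feedback property as ``$v_i$ dominates at least half of $\{v_{i+1},\dots,v_j\}$'' is correct only for tournaments (where every pair in the interval is adjacent); the paper's weighted feedback property is the inequality $\omega(N^+_{D[i,j]}(v_i))\geq\omega(N^-_{D[i,j]}(v_i))$, which says nothing about non-neighbors and is strictly weaker than a domination-of-half statement once missing edges are present. Since your whole program hinges on pushing median-order arguments past the tournament case, conflating these two formulations hides exactly the place where the argument loses control. Your diagnosis of where the induction stalls --- overlapping missing edges with no global invariant to reconcile the local reorientations --- is accurate, but an accurate diagnosis of why a proof does not exist is not a proof.
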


In 1996, Fisher \cite{fisher} solved the SNC for tournaments by using a certain probability distribution
on the vertices. Another proof of Dean's conjecture was established in 2000 by Havet and Thomass\'{e} \cite{m.o.}. Their short proof uses a tool called median orders. Furthermore, they have proved that if a tournament has no sink vertex then there are at least two vertices
with the SNP.\\

\par Let $D=(V,E)$ be a digraph (vertex) weighted by a positive real valued function \hskip.1cm$\omega :V\rightarrow \mathcal{R_+}$. The couple $(D, \w)$ (or simply $D$) is called a \textit{weighted digraph}. The weight
of an arc $e=(x,y)$ is $\omega(e):=\omega(x).\omega(y)$ . The weight of a set of vertices (resp. edges) is the sum of the weights of its members.
We say that a vertex $v$ has the \textit{weighted SNP }if $\omega(N^+(v))\leq \omega(N^{++}(v))$. It is known that the SNC is equivalent to its weighted
version:\emph{ Every weighted oriented graph has a vertex with the weighted SNP}.\\

\par
Let $L=v_1v_2...v_n$  be an ordering of the vertices of a weighted digraph $(D,\omega)$.
An arc $e=(v_i,v_j)$ is \textit{forward} with respect to $L$ if $i<j$. Otherwise $e$ is a \textit{backward} arc. A \textit{weighted median order} $L=v_1v_2...v_n$ of $D$ is an order of the vertices of $D$ that maximizes the weight of the set of forward arcs of $D$, i.e., the set $\{(v_i,v_j)\in E(D); i<j\}$. In other words, $L=v_1v_2...v_n$ is a weighted median order of $D$ if $\omega(L)=max\{\omega(L'); L'$ is an ordering of ther vertices of $D\}$.
In fact, the weighted median order $L$ satisfies the \textit{feedback property}: For all $1\leq i\leq j\leq n:$
$$ \omega ( N^{+}_{D[i,j]}(v_i) ) \geq  \omega ( N^{-}_{D[i,j]}(v_i) ) $$
and
$$ \omega ( N^{-}_{D[i,j]}(v_j) ) \geq  \omega ( N^{+}_{D[i,j]}(v_j) ) $$
where $[i,j]:=\{v_i,v_{i+1}, ...,v_j\}$.\\

 Indeed, suppose to the contrary that $ \omega ( N^{+}_{D[i,j]}(v_i) ) <  \omega ( N^{-}_{D[i,j]}(v_i) ) $. Consider the order $L'=v_1...v_{i-1}v_{i+1}...v_jv_iv_{j+1}...v_n$ obtained from $L$ by inserting $v_i$ just after $v_j$. Then we have:
 $$\omega(L')=\omega(L)+\omega(\{(v_k,v_i)\in E(D); i\leq k\leq j\})-\omega(\{(v_i,v_k)\in E(D); i\leq k\leq j\})$$ $$=\omega(L)+\omega(v_i).\omega ( N^{-}_{D[i,j]}(v_i) )-\omega(v_i).\omega ( N^{+}_{D[i,j]}(v_j) )$$ $$=\omega(L)+\omega(v_i).(\omega ( N^{-}_{D[i,j]}(v_i) )-\omega ( N^{+}_{D[i,j]}(v_j) ))>\omega(L),$$ which contradicts the maximality of $\omega(L)$.\\

It is also known that if we reverse the orientation of a backward arc $e=(v_i,v_j)$ of $D$ with respect to $L$, then $L$ is again a weighted median order of the new weighted digraph $D'=D-(v_i,v_j)+(v_j,v_i)$.

\par When $\omega = 1$, we obtain
     the definition of median orders of a digraph (\cite{m.o.,fidler}).\\

Let $L=v_1v_2...v_n$ be a weighted median order. Among the vertices not in $N^+(v_n)$ two types are distinguished: A vertex $v_j$ is \textit{good} if there is $i\leq j$ such that
$v_n\ra v_i\ra v_j$, otherwise $v_j$ is a \textit{bad vertex}. The set of good vertices of $L$ is denoted by $G_L^D$ \cite{m.o.} ( or $G_L$ if there is no confusion ).
Clearly, $G_L \sub N^{++}(v_n)$. The last vertex $v_n$ is called a feed vertex of $(D,\omega)$.\\

In 2007, Fidler and Yuster \cite{fidler} proved that SNC holds for oriented graphs missing a matching.
They have used median orders and another tool called the dependency digraph. However, there proof does not
guarantee that the vertex found to have the SNP is a feed vertex.\\

\par In 2012, Ghazal also used the notion of weighted median order to prove the weighted SNC for digraphs missing a generalized star. As a corollary, the weighted version holds for digraphs missing a star, complete graph or a sun \cite{gs}. He also used the dependency digraph to prove SNC for other classes of oriented graphs \cite{contrib}.\\

\par We say that a missing edge $x_1y_1$ \textit{loses to} a missing edge $x_2y_2$ if:
$x_1\rightarrow x_2$, $y_2\notin N^{+}(x_1)\cup N^{++}(x_1)$, $y_1\rightarrow y_2$ and $x_2\notin N^{+}(y_1)\cup N^{++}(y_1)$.
The \textit{dependency digraph} $\Delta$ of $D$ is defined as follows: Its vertex set consists of all the missing
edges and $(ab,cd)\in E(\Delta)$ if $ab$ loses to $cd$ \cite{fidler,contrib}. Note that $\Delta$ may contain digons.\\

\begin{defn}\cite{gs}
In a digraph D, a missing edge $ab$ is called a \emph{good missing edge} if:\\
$(i)$   $(\forall v \in V\backslash\{a,b\})[(v\rightarrow a)\Rightarrow(b\in N^{+}(v)\cup N^{++}(v))]$ or\\
$(ii)$ $(\forall v \in V\backslash\{a,b\})[(v\rightarrow b)\Rightarrow(a\in N^{+}(v)\cup N^{++}(v))]$.\\
If $ab$ satisfies $(i)$ we say that $(a,b)$ is a convenient orientation of $ab$.\\
If $ab$ satisfies $(ii)$ we say that $(b,a)$ is a convenient orientation of $ab$.
\end{defn}

We will need the following observation:
\begin{lem} \cite{fidler}
 Let $D$ be an oriented graph and let $\Delta$ denote its dependency digraph. A missing edge $ab$ is good
if and only if its in-degree in $\Delta$ is zero.
\end{lem}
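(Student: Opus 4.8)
The plan is to prove the biconditional through its contrapositive in both directions: I will spell out what it means for $ab$ to fail to be good and what it means for $ab$ to have positive in-degree in $\Delta$, and show that these two negations coincide. In other words, I will establish that $ab$ is \emph{not} good if and only if some missing edge loses to $ab$.

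For the easy implication, I would suppose the in-degree of $ab$ in $\Delta$ is positive, so that some missing edge $cd$ loses to $ab$. Since a missing edge is unordered, I treat both labelings of $\{a,b\}$; by symmetry it suffices to consider the labeling for which the defining conditions of ``loses to'' read $c\to a$, $b\notin N^{+}(c)\cup N^{++}(c)$, $d\to b$ and $a\notin N^{+}(d)\cup N^{++}(d)$. First I note that $c,d\notin\{a,b\}$: from $c\to a$ and $d\to b$ we get $c\neq a$ and $d\neq b$, while $c=b$ or $d=a$ would produce an arc between $a$ and $b$, contradicting that $ab$ is missing. Then $c$ witnesses the failure of clause $(i)$ of the definition of a good missing edge, and $d$ witnesses the failure of clause $(ii)$; hence $ab$ is not good.

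For the converse, I would suppose $ab$ is not good, so that both $(i)$ and $(ii)$ fail. This yields a vertex $v\in V\setminus\{a,b\}$ with $v\to a$ and $b\notin N^{+}(v)\cup N^{++}(v)$, and a vertex $w\in V\setminus\{a,b\}$ with $w\to b$ and $a\notin N^{+}(w)\cup N^{++}(w)$. The goal is to show that $vw$ is itself a missing edge that loses to $ab$. The heart of the argument is verifying that $vw$ really is a missing edge, i.e.\ that $v\neq w$ and that no arc joins them. If $v=w$ then $v\to a$ and $v\to b$, contradicting $b\notin N^{+}(v)$. If $v\to w$, then since $v\neq b$ (otherwise $b\to a$) the path $v\to w\to b$ forces $b\in N^{+}(v)\cup N^{++}(v)$, a contradiction; symmetrically $w\to v$ would force $a\in N^{+}(w)\cup N^{++}(w)$, again impossible. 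Hence $vw$ is a missing edge, and the four conditions $v\to a$, $b\notin N^{+}(v)\cup N^{++}(v)$, $w\to b$, $a\notin N^{+}(w)\cup N^{++}(w)$ say precisely that $vw$ loses to $ab$. Therefore $(vw,ab)\in E(\Delta)$ and the in-degree of $ab$ is positive.

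I expect the converse to be the only place needing real care. The forward direction is a direct unpacking of the definitions, but the converse must manufacture an honest missing edge, i.e.\ a genuine vertex of $\Delta$, out of the two witnesses $v$ and $w$ to non-goodness. The main obstacle is ruling out every arc between $v$ and $w$ together with the degenerate case $v=w$, each of which I dispatch using the second-neighborhood hypotheses $b\notin N^{+}(v)\cup N^{++}(v)$ and $a\notin N^{+}(w)\cup N^{++}(w)$.
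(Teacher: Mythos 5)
Your proof is correct. The paper itself states this lemma without proof (it is quoted from Fidler and Yuster), so there is no in-paper argument to compare against; your two contrapositive directions are the natural unpacking of the definitions, and the one step that needs care --- manufacturing a genuine missing edge $vw$ from the two witnesses by ruling out $v=w$, $v\to w$ and $w\to v$ via the hypotheses $b\notin N^{+}(v)\cup N^{++}(v)$ and $a\notin N^{+}(w)\cup N^{++}(w)$ --- is handled properly.
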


In the next section, we will define good median orders and good digraphs and prove a statement which implies
that every feed vertex of a weighted tournament has the weighted SNP. In the last section, we refine the proof of Fidler and Yuster and use good median orders to exhibit a feed vertex with the SNP in the case of oriented graphs missing a matching.

\section{Good median orders}
\label{sec:1}
Let $D$ be a (weighted) digraph and let $\Delta$ denote its dependency digraph. Let $C$ be a connected component of $\Delta$.
Set $K(C)=$ $\{u\in V(D);$ there is a vertex $v$ of $D$ such that $uv$ is a missing edge and belongs to $C$ $\}$. The \emph{interval
graph of $D$}, denoted by $\mathcal{I}_D$ is defined as follows. Its vertex set consists of the connected components
of $\Delta$ and two vertices $C_1$ and $C_2$ are adjacent if $K(C_1)\cap K(C_2)\neq \phi$. So  $\mathcal{I}_D$ is the
intersection graph of the family $\{K(C);C$ is a connected component of $\Delta$ $\}$. Let $\xi$ be a connected component of $\mathcal{I}_D$.
We set $K(\xi)=\displaystyle\cup _{C\in \xi}K(C)$. Clearly, if $uv$ is a missing edge in $D$ then there is a unique connected component $\xi$ of
$\mathcal{I}_D$ such that $u$ and $v$ belong to $K(\xi)$. For $f\in V(D)$, we set $J(f)=\{f\}$ if $f$ is a whole vertex, otherwise $J(f)=K(\xi)$,
where $\xi$ is the unique connected component of $\mathcal{I}_D$ such that $f\in K(\xi)$. Clearly, if $x\in J(f)$ then $J(f)=J(x)$ and
if $x\notin J(f)$ then $x$ is adjacent to every vertex in $J(f)$.\\

Let $L=x_{1}\cdots x_{n}$ be a  (weighted) median order of a digraph $D$. For $i<j$, the sets
$[i,j]:=[x_i,x_j]:=\{x_i,x_{i+1},...,x_j\}$ and  $]i,j[=[i,j]\bks \{x_i,x_j\}$
are called \textit{intervals} of $L$. We recall that $K\subseteq V(D)$ is an \textit{interval of $D$} if for every $u,v\in K$ we have:
$N^+(u)\backslash K =N^+(v)\backslash K$ and $N^-(u)\backslash K =N^-(v)\backslash K$. The following shows a relation
between the intervals of $D$ and the intervals of $L$.

\begin{prop}
Let $\mathcal{I}=\{I_1,...,I_r\}$ be a set of pairwise disjoint intervals of $D$. Then for every weighted median order $L$ of $D$,
there is a weighted median order $L'$ of $D$ such that: $L$ and $L'$ have the same feed vertex and every interval in $\mathcal{I}$
is an interval of $L'$.
\end{prop}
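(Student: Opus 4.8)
The plan is to prove the statement one interval at a time and then iterate, the whole argument resting on a cancellation that forces every ``gathering'' move to preserve the weight of the forward arcs. First I would treat a single interval $I\in\mathcal{I}$. Writing the elements of $I$ in the order they occur in $L$, I grow a contiguous block greedily: suppose $P$ is a maximal already-contiguous block of $I$-vertices, $v$ is the next $I$-vertex after it, and $W$ is the (necessarily contiguous) set of vertices lying strictly between $P$ and $v$, so that $L$ locally reads $\cdots P\,W\,v\cdots$. Since $I$ is an interval of $D$, every $w\in W$ is joined to all of $I$ in the same way; I set $a_w=1$ if every vertex of $I$ sends an arc to $w$ and $a_w=0$ otherwise, $b_w=1$ if every vertex of $I$ receives an arc from $w$ and $b_w=0$ otherwise, and $S=\sum_{w\in W}(a_w-b_w)\,\omega(w)$.

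Second, I would compare two re-orderings: moving $v$ leftward across $W$, giving $\cdots P\,v\,W\cdots$, and moving the whole block $P$ rightward across $W$, giving $\cdots W\,P\,v\cdots$. A direct count of the forward arcs that reverse direction yields weight increments $\omega(v)\,S$ and $-\omega(P)\,S$ respectively, where $\omega(P)=\sum_{u\in P}\omega(u)>0$. Since $L$ is a weighted median order its forward weight is maximum, so both increments are $\le 0$; as $\omega(v),\omega(P)>0$ this forces $S=0$. Hence both moves preserve the forward weight and produce new weighted median orders, and I keep the first, which enlarges the contiguous block by one vertex. After at most $|I|$ such steps $I$ is an interval of the resulting order. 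To respect the feed vertex I grow the block on the correct side: if $x_n\notin I$ every move stays within positions $<n$ and fixes $x_n$, while if $x_n\in I$ I run the mirror-image right-to-left version, which never disturbs the last vertex. I expect this to be the crux: recognising that the two opposite block-moves have exactly opposite weight effects, so that maximality of $L$ pins $S=0$ and makes each move free of charge.

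Third, I would handle the family $\mathcal{I}$ by induction on $r$, gathering $I_1,I_2,\dots$ in succession and checking that gathering $I_j$ does not destroy the blocks already formed for $I_1,\dots,I_{j-1}$. The key observation is that when a move transports an $I_j$-vertex across the set $W$, any previously formed block $B\subseteq I_\ell$ with $\ell<j$ is, by the contiguity of $B$ and the disjointness of the intervals, either entirely contained in $W$ or disjoint from it: a vertex of $B$ strictly inside the position-range of $W$ together with a vertex of $B$ outside it would force $B$ to straddle an $I_j$-vertex, contradicting either $B\subseteq I_\ell$ or the contiguity of $B$. Consequently the move carries $B$ rigidly and leaves it contiguous. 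Processing first the (at most one) interval containing $x_n$, pinned as a block ending at position $n$, and then the remaining intervals entirely to its left, keeps $x_n$ in last position throughout and yields the desired $L'$. The only obstacle beyond the $S=0$ identity is thus this non-splitting bookkeeping, which the disjointness hypothesis and the ``interval'' (module) condition make routine.
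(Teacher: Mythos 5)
Your proof is correct and follows essentially the same route as the paper: both arguments use the maximality of the weighted median order together with the module property of each $I_k$ to show that the local move gathering two consecutive $I_k$-vertices into adjacent positions has zero cost, and then iterate over the vertices of $I_k$ and over the members of $\mathcal{I}$. Your derivation of the identity $S=0$ from two opposite block moves is just a repackaging of the paper's chain of feedback-property inequalities (which are themselves obtained by such moves), and your extra bookkeeping about the feed vertex and the non-splitting of previously formed blocks only makes explicit what the paper leaves implicit.
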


\begin{proof}
Let \mo be a weighted median order of a weighted digraph $(D,\w)$ and let $\mathcal{I}=\{I_1,...,I_r\}$ be a set of pairwise disjoint intervals of $D$.
We will use the feedback property to prove it. Suppose $a,b\in I_1$ with
$a=x_i$, $b=x_j$, $i<j$ and $[x_i,x_j]\cap I_1=\{x_i,x_j\}$. Since $I_1$ is an interval of $D$, we have $N^+_{]i,j[}(x_i)=N^+_{]i,j[}(x_j)$
and $N^-_{]i,j[}(x_i)=N^-_{]i,j[}(x_j)$. So,
$\w(N^-_{]i,j[}(x_i))$ $\leq\w(N^+_{]i,j[}(x_i))=N^+_{]i,j[}(x_j)\leq \w(N^-_{]i,j[}(x_j))=\w(N^-_{]i,j[}(x_i))$, where
the two inequalities are by the feedback property. Whence, all the quantities in the previous statement are equal. In particular,
$\w(N^+_{]i,j[}(x_i))= \w(N^-_{]i,j[}(x_i))$. Let $L_1$ be the enumeration $x_1...x_{i-1}x_{i+1}...x_{j-1}x_{x_i}x_{j}x_{j+1}...x_{n}$.
Then $\w(L_1)=\w(L) + \w(N^-_{]i,j[}(x_i)) - \w(N^+_{]i,j[}(x_i))=\w(L)$. Thus, $L_1$ is a weighted median order of $D$.
By successively repeating this argument, we obtain a weighted median order in which $I_1$ is an interval of $L$. Again,
by successively repeating the argument for each $I\in \mathcal{I}$, we obtain the desired order.
\end{proof}

We say that $D$ is \textit{good digraph} if the sets $K(\xi)$'s are intervals of $D$. By the previous proposition, every good digraph has a (weighted) median order $L$
such that the $K(\xi)$'s form intervals of $L$. Such an enumeration is called a \textit{good (weighted) median order} of the good digraph $D$.\\

\begin{thm}\label{prince}
Let $(D,\omega)$ be a good weighted oriented graph and let L be a good weighted median order of $(D,\omega)$, with feed vertex say f.
Then for every $x\in J(f)$, we have $\omega(N^+(x)\backslash J(f))\leq\omega(G_L\backslash J(f))$.
So if $x$ has the weighted SNP in $(D[J(f)], \w)$, then it has the weighted SNP in $D$.
\end{thm}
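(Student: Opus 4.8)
The plan is to exploit the interval structure of $J(f)$ so as to treat the whole block $J(f)$ as a single ``feed vertex'' and reduce the statement to a weighted Havet--Thomass\'e inequality. First I would record the consequences of $L$ being a \emph{good} weighted median order: since $J(f)$ is an interval of $L$ that contains the last vertex $f$, it is a final segment $J(f)=\{x_m,\dots,x_n\}$ of $L$. Because $J(f)$ is moreover an interval of $D$, every vertex $u\in J(f)$ has the same out- and in-neighbourhood outside $J(f)$; write $A:=N^+(f)\setminus J(f)$ and $B:=N^-(f)\setminus J(f)$, so that $N^+(x)\setminus J(f)=A$ for every $x\in J(f)$ and $V(D)\setminus J(f)=A\sqcup B$ (every vertex outside $J(f)$ is whole with respect to $J(f)$). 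With this notation the inequality to be proved becomes $\omega(A)\le\omega(G_L\setminus J(f))$, a single statement independent of the choice of $x\in J(f)$.

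Next I would characterise the good vertices lying outside $J(f)$. Using the interval property one checks that $x_j\notin J(f)$ is good if and only if $x_j\in B$ and there is an index $i\le j$ with $x_i\in A$ and $x_i\ra x_j$; in particular $G_L\setminus J(f)\subseteq B$, and every such good vertex lies in $N^{++}(x)$ for every $x\in J(f)$ (indeed $x\ra x_i\ra x_j$ with $x_j\notin N^+(x)$ since $x_j\in N^-(x)$). This is exactly the good-vertex structure one obtains after contracting $J(f)$ to a single \emph{whole} vertex $z$ of weight $\omega(J(f))$: the contracted enumeration is a weighted median order of the contracted oriented graph, its feed vertex $z$ is whole, $N^+(z)=A$, and its good vertices are precisely $G_L\setminus J(f)$. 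Thus $\omega(A)\le\omega(G_L\setminus J(f))$ is the weighted Havet--Thomass\'e inequality $\omega(N^+(\mathrm{feed}))\le\omega(G_L)$ for an oriented graph whose feed vertex is whole; the tournament case (where $\Delta$ is empty, every vertex is whole and $J(f)=\{f\}$) is then the special case yielding the weighted SNP for every feed vertex.

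I would establish this core inequality by induction along the median order, peeling off the first vertex $x_1$ and using the feedback property. If $x_1\in B$, then deleting it leaves $A$ and the set of good vertices unchanged and preserves both the median-order property (deletion of the first vertex of a median order) and the interval $J(f)$, so the inductive hypothesis applies verbatim. The delicate case is $x_1\in A$: here $J(f)\subseteq N^-(x_1)$, so the feedback property at $x_1$ on $[1,n]$ gives $\omega(N^+(x_1))\ge\omega(N^-(x_1))\ge\omega(J(f))$, and the forward out-neighbours of $x_1$ that lie in $B$ are exactly the good vertices ``charged'' to $x_1$. Reversing the (backward) arcs from $J(f)$ to $x_1$, which keeps $L$ a weighted median order, moves $x_1$ into $B$ and lets the previous case dispose of the remaining out-neighbours.

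The main obstacle is precisely this charging when $x_1\in A$: a good vertex witnessed by $x_1$ may also be witnessed by another out-neighbour of the block, so a naive one-to-one assignment of out-neighbours to good vertices need not carry enough weight. I expect to resolve it by using the feedback property globally rather than vertex by vertex, i.e. by applying it to the nested final segments $[\,\mathrm{index}(a),n\,]$ for $a\in A$ to produce a weighted (Hall/flow-type) matching of $A$ into $G_L\setminus J(f)$; the feed-block feedback $\omega(B)\ge\omega(A)$ is the base estimate that this refinement must sharpen to $\omega(A)\le\omega(G_L\setminus J(f))$. Finally, the stated consequence is immediate once the inequality is in hand: splitting $N^+(x)=\big(N^+(x)\cap J(f)\big)\sqcup A$ and using $N^{++}_{D[J(f)]}(x)\subseteq N^{++}_D(x)\cap J(f)$ together with $G_L\setminus J(f)\subseteq N^{++}_D(x)\setminus J(f)$, the weighted SNP of $x$ in $D[J(f)]$ and the inequality $\omega(A)\le\omega(G_L\setminus J(f))$ add up to the weighted SNP of $x$ in $D$.
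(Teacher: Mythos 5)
There is a genuine gap at the heart of your argument. Your reduction of the inequality to the single statement $\omega(A)\le\omega(G_L\setminus J(f))$ for the contracted order, and your final derivation of the weighted SNP from it, are both fine and essentially parallel the paper's first reduction (the paper restricts to $D[x_1,\dots,x_t]$ rather than contracting, but the effect is the same). The problem is that the core inequality you are left with is \emph{not} a known Havet--Thomass\'e inequality you can quote: their result is for tournaments, whereas your contracted digraph still contains all the other missing-edge blocks $K(\xi)$, and for general oriented graphs with a whole feed vertex the inequality $\omega(N^+(f))\le\omega(G_L)$ is false. What you have reduced to is exactly the $J(f)=\{f\}$ case of the theorem itself, which still requires the full ``good digraph'' hypothesis; your proposal never says where the interval structure of the \emph{other} blocks $J(x_i)$ enters, and it must enter somewhere.

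Your proposed induction --- peeling off $x_1$ --- does not close this. The case $x_1\in B$ is fine, but in the case $x_1\in A$, reversing the backward arcs from $J(f)$ to $x_1$ and deleting $x_1$ loses both $\omega(x_1)$ from the left side and, potentially, the entire weight of every good vertex witnessed only by $x_1$ from the right side; the naive bookkeeping only yields $\omega(A)\le\omega(G_L\setminus J(f))+\omega(x_1)$. You acknowledge this and say you ``expect to resolve it'' with a Hall/flow-type matching from nested feedback inequalities, but that step is precisely the theorem and is left unproved. The paper's proof uses a different decomposition: it locates the \emph{first bad vertex} $x_i$ of $L$, observes that $J(x_i)$ is an interval $[x_i,x_p]$ consisting entirely of bad vertices (this is where goodness of $D$ is used), applies the feedback property at $x_i$ on the initial segment $[1,i]$ to get $\omega(N^+(x_n)\cap[1,i])\le\omega(N^+(x_i)\cap[1,i])\le\omega(N^-(x_i)\cap[1,i])\le\omega(G_L\cap[1,i])$, and then applies induction to the suffix order $x_{p+1}\cdots x_n$; since $[i,p]$ contains no out-neighbour of $x_n$, the two pieces sum to the claim. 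Some argument of this shape (splitting at the first bad block rather than at $x_1$, and invoking the feedback property at $x_i$ rather than at $x_1$) is the missing idea, and without it your proof is incomplete.
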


\begin{proof}
The proof is by induction $n$, the number of vertices of $D$. It is trivial for $n=1$. Let $L=x_1...x_n$ be a good weighted median order of ($D,\omega$). Set $f=x_n$, $J(f)=[x_t,x_n]$, $L_1=x_1...x_t$ and $D_1=D[x_1,x_t]$. Then $(D_1, \w)$ is a good weighted oriented graph and $L_1$ is a good weighted median order of $(D_1, \omega)$ in which $J(x_t)=\{x_t\}$. Suppose that $t<n$. Then by the induction hypothesis, $\w (N^+_{D_1}(x_t))\leq \w(G_{L_1})$. However, $J(f)$ is an interval of $D$, then for every $x\in J(f)$, we have $\omega(N^+(x)\backslash J(f))=\w (N^+(x_t)\backslash J(f))=\w (N^+_{D_1}(x_t))\leq \w(G_{L_1})=\omega(G_L\backslash J(f))$.
Now suppose that $t=n$. If $L$ does not have any bad vertex
then $N^-(x_n)=G_L$. Whence, $\omega(N^+(x_n))\leq\omega(N^-(x_n))=\omega(G_L)$ where the inequality is by the feedback property.
Now suppose that $L$ has a bad vertex and let $i$ be the smallest such that $x_i$ is bad. Since $J(x_i)$ is an interval of $D$ and $L$,
then every vertex in $J(x_i)$ is bad and thus $J(x_i)=[x_i,x_p]$ for some $p<n$.
For $j<i$, $x_j$ is either an out-neighbor of $x_n$ or a good vertex, by definition of $i$.
Moreover, if $x_j\in N^+(x_n)$ then $x_j\in N^+(x_i)$. So $N^+(x_n)\cap [1,i]\subseteq N^+(x_i)\cap [1,i]$.
Equivalently, $N^-(x_i)\cap [1,i]\subseteq G_L\cap [1,i]$.
Therefore, $\omega (N^+(x_n)\cap[1,i])\leq \omega (N^+(x_i)\cap[1,i])\leq\omega( N^-(x_i)\cap [1,i])\leq\omega(G_L\cap [1,i])$,
where the second inequality is by the feedback property. Now $L'=x_{p+1}...x_n$ is good also. By induction,
$\omega(N^+(x_n)\cap[p+1,n])\leq \omega (G_{L'})$. Note that $G_{L'}\subseteq G_L\cap [p+1,n]$.
Whence $\omega(N^+(x_n))=\omega (N^+(x_n)\cap[1,i])+\omega(N^+(x_n)\cap[p+1,n])\leq
\omega(G_L\cap [1,i])+\omega (G_{L}\cap [p+1,n])=\omega(G_L)$.
The second part of the statement is obvious.
\end{proof}

Since every (weighted) tournament is a good (weighted) oriented graph, we obtain the following two results.

\begin{cor}(\cite{fidler})
Let L be a weighted median order of a weighted tournament $(T,\omega)$ with feed vertex say f. Then $\omega(N^+(f))\leq\omega(G_L)$.
\end{cor}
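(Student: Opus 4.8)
The plan is to derive this Corollary as an immediate consequence of Theorem~\ref{prince}, after checking that its hypotheses are satisfied vacuously by any tournament. First I would observe that a tournament $T$ has no missing edges, so its dependency digraph $\Delta$ has an empty vertex set; consequently the interval graph $\mathcal{I}_T$ is empty and the family $\{K(\xi)\}$ is empty. Hence the defining condition of a good digraph, namely that each $K(\xi)$ be an interval of $T$, holds vacuously, so $(T,\omega)$ is a good weighted oriented graph and any weighted median order $L$ of $T$ is automatically a good weighted median order. This is exactly the remark preceding the statement.

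Next I would pin down $J(f)$ for the feed vertex $f$. Since every vertex $v$ of a tournament satisfies $d^+(v)+d^-(v)=|V(T)|-1$, every vertex is whole; in particular $f$ is whole, so by the definition of $J$ we have $J(f)=\{f\}$.

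Then I would apply Theorem~\ref{prince} with the choice $x=f\in J(f)$. This yields $\omega(N^+(f)\backslash J(f))\leq\omega(G_L\backslash J(f))$, that is $\omega(N^+(f)\backslash\{f\})\leq\omega(G_L\backslash\{f\})$, and it only remains to delete the two copies of $\{f\}$. On the left, since $T$ is an oriented graph it has no loops, so $f\notin N^+(f)$ and thus $N^+(f)\backslash\{f\}=N^+(f)$. On the right I would argue that $f=v_n$ cannot itself be a good vertex: a good vertex $v_j$ requires $v_n\ra v_i\ra v_j$ for some $i\le j$, and taking $v_j=v_n$ would force the digon $v_n\ra v_i\ra v_n$, which is impossible in an oriented graph; hence $f\notin G_L$ and $G_L\backslash\{f\}=G_L$. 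Combining the two simplifications gives $\omega(N^+(f))\leq\omega(G_L)$, as desired.

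The main obstacle is essentially bookkeeping rather than mathematics, since all the substantive content is already packaged inside Theorem~\ref{prince}. The only point that warrants a moment's care is confirming that stripping $J(f)=\{f\}$ from both sides changes nothing, i.e.\ that $f$ belongs to neither $N^+(f)$ nor $G_L$; both facts follow from the absence of loops and digons.
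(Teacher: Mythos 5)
Your proposal is correct and matches the paper's approach exactly: the paper derives this corollary from Theorem~\ref{prince} with the one-line remark that every weighted tournament is a good weighted oriented graph, and your argument simply fills in the routine details (every vertex of a tournament is whole, so $J(f)=\{f\}$, and $f$ lies in neither $N^+(f)$ nor $G_L$). Nothing further is needed.
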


\begin{cor}(\cite{m.o.})
Let L be a median order of a tournament with feed vertex say f. Then $|N^+(f))| \leq |G_L|$.
\end{cor}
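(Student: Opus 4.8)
The plan is to obtain this corollary as the unweighted specialization of Theorem~\ref{prince}, following the remark already made in the text that every tournament is a good oriented graph. First I would verify the goodness claim directly: a tournament $T$ has no missing edges, so its dependency digraph $\Delta$ has empty vertex set, the interval graph $\mathcal{I}_T$ is likewise empty, and there are no components $\xi$ at all. The requirement that each $K(\xi)$ be an interval of $T$ is therefore vacuously satisfied, so $T$ is a good oriented graph and, trivially, every median order of $T$ is a good median order.

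Next I would identify $J(f)$ for the feed vertex $f$. Since every vertex of a tournament is whole, we have $d^{+}(f)+d^{-}(f)=|V(T)|-1$, so by definition $J(f)=\{f\}$. This is the key reduction: the set attached to the feed vertex collapses to a single point, so the quantity $\omega(N^+(x)\backslash J(f))$ appearing in Theorem~\ref{prince} becomes simply $\omega(N^+(f)\backslash\{f\})$ once we take $x=f$.

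With these two observations in hand I would apply Theorem~\ref{prince} with $x=f$, the unique element of $J(f)$. It yields $\omega(N^+(f)\backslash\{f\})\leq\omega(G_L\backslash\{f\})$. Because $T$ is an oriented graph there are no loops, so $f\notin N^+(f)$; and since $G_L\subseteq N^{++}(f)$ while $f\notin N^{++}(f)$ (a directed path $f\ra x\ra f$ would be a digon, which is forbidden), we also have $f\notin G_L$. Both set differences are therefore harmless, and the inequality reads $\omega(N^+(f))\leq\omega(G_L)$, which is exactly the weighted statement of the preceding corollary.

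Finally, to recover the unweighted inequality I would specialize to the constant weight $\omega\equiv 1$. Under this choice a weighted median order is just a median order, and the weights of $N^+(f)$ and of $G_L$ reduce to their cardinalities, giving $|N^+(f)|\leq|G_L|$ as desired. I do not expect a genuine obstacle here; the only point requiring care is the verification that the hypotheses of Theorem~\ref{prince} hold, namely that $T$ is good and that $J(f)=\{f\}$, after which the corollary is immediate.
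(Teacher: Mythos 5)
Your proposal is correct and follows exactly the route the paper intends: the paper derives this corollary (and the weighted one before it) from Theorem~\ref{prince} by the single observation that every tournament is a good oriented graph, and you have merely filled in the routine details (vacuous goodness, $J(f)=\{f\}$ since every vertex of a tournament is whole, and specialization to $\omega\equiv 1$). No gap; this matches the paper's argument.
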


Let $L$ be a good weighted median order of a good oriented graph $D$ and let $f$ denote its feed vertex. By theorem \ref{prince}, for every
$x\in J(f)$, $\omega(N^+(x)\backslash J(f))\leq\omega(G_L\backslash J(f))$. Let $b_1,\cdots,b_r$ denote the bad vertices of
$L$ not in $J(f)$ and $v_1,\cdots ,v_s$ denote the non bad vertices of $L$ not in $J(f)$, both enumerated in increasing order with respect
to their index in $L$.\\
If $\omega(N^+(f)\backslash J(f))<\omega(G_L\backslash J(f))$, we set $Sed(L)=L$. If $\omega(N^+(f)\backslash J(f))=\omega(G_L\backslash J(f))$,
we set $sed(L)=b_1\cdots b_rJ(f)v_1\cdots v_s$. This new order is called the \textit{sedimentation of $L$}.
\begin{lem}\label{gsedlem}
Let $L$ be a good weighted median order of a good weighted oriented graph $(D,\omega)$. Then $Sed(L)$ is a good weighted median order of $(D,\omega)$.
\end{lem}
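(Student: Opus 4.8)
The plan is to split along the definition of $Sed(L)$. When $\omega(N^+(f)\backslash J(f))<\omega(G_L\backslash J(f))$ we have $Sed(L)=L$ and nothing is to be shown, so assume $\omega(N^+(f)\backslash J(f))=\omega(G_L\backslash J(f))$ and $Sed(L)=b_1\cdots b_r\,J(f)\,v_1\cdots v_s$. I have to check two things: that the classes $K(\xi)$ are still intervals of $Sed(L)$ (so that $Sed(L)$ is \emph{good}), and that $Sed(L)$ is again a weighted median order.

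For goodness I would first observe that being \emph{bad} is constant on each class $J(x)$ with $J(x)\neq J(f)$. Indeed distinct classes are modules of $D$ that are completely joined to each other, so membership in $N^+(f)$ is constant on $J(x)$; and if $x\notin N^+(f)$ then no vertex of $J(x)$ lies in $N^+(f)$, so any witness $u$ with $f\ra u\ra x$ satisfies $u\notin J(x)$. Since $J(x)$ is an interval of $L$, such a $u$ lies on the same side of the whole block $J(x)$ for every vertex of the class, whence all vertices of $J(x)$ are simultaneously good or bad. Thus $B=\{b_1,\dots,b_r\}$ and $NB=\{v_1,\dots,v_s\}$ are unions of classes, and since $Sed(L)$ keeps the $L$-order inside $B$, inside $NB$, and inside $J(f)$, every class stays consecutive; hence $Sed(L)$ is good.

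For the median property it suffices, $L$ being of maximum weight, to prove $\omega(Sed(L))\ge\omega(L)$. I would contract the module $J(f)$ to a single (now whole) vertex, which preserves median orders and rewrites the hypothesis as $\omega(N^+(f))=\omega(G_L)$; so assume $J(f)=\{f\}$. Passing from $L$ to $Sed(L)$, the only pairs that change their relative order are the $f$--$NB$ pairs and the $NB$--$B$ pairs. The $f$--$NB$ block contributes exactly
$$\omega(f)\bigl(\omega(N^+(f))-\omega(G_L)\bigr)=0,$$
because each out-neighbour of $f$ turns a backward arc into a forward one while each good vertex does the reverse. Everything therefore reduces to showing that the $NB$--$B$ contribution — the effect of sorting the prefix so that all bad vertices precede all non-bad ones — vanishes.

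For the $NB$--$B$ contribution I would induct on $|B|$. The case $|B|=0$ is exactly the $f$--$NB$ computation above. If the first vertex $x_1$ of $L$ is bad it is also first in $Sed(L)$, so $Sed(L)=x_1\cdot Sed(L-x_1)$; deleting $x_1$ preserves the equality (a bad vertex lies in neither $N^+(f)$ nor $G_L$ and is never a witness, so both sets are unchanged) and prepending a fixed first vertex only adds the constant $\omega(x_1)\omega(N^+(x_1))$ to the weight, so the inductive conclusion lifts verbatim. The one remaining configuration, and the main obstacle, is when $x_1$ is an \emph{out-neighbour} of $f$ while bad vertices still exist: here $x_1$ must migrate into the trailing $NB$-block while some bad vertex reaches the front. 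I expect to handle it through the rigidity forced by the equality hypothesis — when $\omega(N^+(f))=\omega(G_L)$ the underlying Havet--Thomass\'e injection $N^+(f)\hookrightarrow G_L$ becomes a weight-preserving bijection — which should pair each out-neighbour with a good vertex so tightly that the feedback inequality $\omega(N^-(b)\cap[<b])\ge\omega(N^+(b)\cap[<b])$ is an equality at every bad vertex $b$; equivalently, a good vertex preceding a bad one must beat it, so the only negative terms in the $NB$--$B$ sum disappear and it is forced to $0$. Granting this tightness the induction closes, giving $\omega(Sed(L))=\omega(L)$, and hence $Sed(L)$ is a good weighted median order.
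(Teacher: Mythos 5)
Your setup is sound and partially matches the paper: the observation that badness is constant on the classes $J(x)$ is exactly what the paper relies on (it follows because each $J(x)$ is an interval of both $D$ and $L$), and your computation that the $f$--$NB$ block contributes $\omega(f)\bigl(\omega(N^+(f)\setminus J(f))-\omega(G_L\setminus J(f))\bigr)=0$ is correct. But the step you yourself flag as ``the main obstacle'' --- that the $NB$--$B$ contribution vanishes --- is the heart of the lemma, and you do not prove it; ``I expect to handle it'' and ``should pair each out-neighbour with a good vertex'' are not arguments. Moreover, the mechanism you propose does not work as stated: the feedback inequality at a bad vertex $b$, namely $\omega(N^-(b)\cap[1,b])\ge\omega(N^+(b)\cap[1,b])$, ranges over \emph{all} predecessors of $b$, including other bad vertices, so even if it were tight at every bad vertex it would not isolate the $NB$-part of the sum, which is what your reordering cost actually involves. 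Tightness restricted to non-bad predecessors is available only at the \emph{first} bad block, whose predecessors are all non-bad.

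That is precisely how the paper closes the argument: it inducts on the number $r$ of bad vertices outside $J(f)$, locates the first bad vertex $x_i$ (so $[1,i-1]\subseteq NB$ and $J(x_i)=[x_i,x_p]$ is entirely bad), and splits the global equality $\omega(N^+(x_n)\setminus J(x_n))=\omega(G_L\setminus J(x_n))$ over the segments $[1,i]$ and $[p+1,t-1]$. Each segment separately satisfies ``$\le$'' (via the chain $\omega(N^+(x_n)\cap[1,i])\le\omega(N^+(x_i)\cap[1,i])\le\omega(N^-(x_i)\cap[1,i])\le\omega(G_L\cap[1,i])$ from the proof of Theorem~\ref{prince}), so equality of the sum forces equality throughout; in particular $\omega(N^+(x_i)\cap[1,i])=\omega(N^-(x_i)\cap[1,i])$, which lets the whole block $J(x_i)$ be moved to the front at zero cost. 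One then deletes that block and applies the induction hypothesis to the remaining order, for which the equality hypothesis persists. You would need to supply this telescoping-equality argument (or an equivalent one) to make your ``rigidity'' step precise; as written, the proof is incomplete at exactly the point where the work is.
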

\begin{proof}
Let $L=x_1...x_n$ be a good weighted local median order of ($D,\omega$).
If $Sed(L)=L$, there is nothing to prove. Otherwise, we may assume that $\omega(N^+(x_n)\backslash  J(x_n))=\omega(G_L\backslash  J(x_n))$.
The proof is by induction on $r$ the number of bad vertices not in $J(x_n)$. Set $J(x_n)=[x_t,x_n]$.
If $r=0$ then we have $N^-(x_n)\backslash J(x_n)=G_L\backslash  J(x_n)$.
Whence, $\omega(N^+(x_n)\backslash J(x_n))=\omega(G_L\backslash  J(x_n))=\omega(N^-(x_n)\backslash J(x_n))$.
Thus, $Sed(L)=J(x_n)x_1...x_{t-1}$ is a good weighted median order.
Now suppose that $r>0$ and let $i$ be the smallest such that $x_i\notin J(x_n)$ and is bad.
As in the proof of theorem \ref{prince}, $J(x_i)=[x_i,x_p]$ for some $p<n$,
$\omega (N^+(x_n)\cap[1,i])\leq \omega (N^+(x_i)\cap[1,i])\leq\omega( N^-(x_i)\cap [1,i])\leq\omega(G_L\cap [1,i])$ and $\omega(N^+(x_n)\cap[p+1,t-1])\leq \omega (G_{L}\cap[p+1,t-1])$.
However, $\omega(N^+(x_n)\backslash  J(x_n))=\omega(G_L\backslash  J(x_n))$, then the previous inequalities
are equalities. In particular, $\omega (N^+(x_i)\cap[1,i])=\omega( N^-(x_i)\cap [1,i])$. Since $J(x_i)$ is
an interval of $L$ and $D$, then for every $x\in J(x_i)$ we have $\omega (N^+(x)\cap[1,i])=\omega( N^-(x)\cap [1,i])$.
Thus $J(x_i)x_1...x_{i-1}x_{p+1}...x_n$ is a good weighted median order. To conclude, apply the induction hypothesis to the good weighted median order $x_1...x_{i-1}x_{p+1}...x_n$.

\end{proof}

Define now inductively $Sed^{0}(L)=L$ and $Sed^{q+1}(L)$ = $Sed(Sed^{q}(L))$. If the process reaches a rank $q$ such that
$Sed^{q}(L)=y_{1}...y_{n}$ and $\omega (N^{+}(y_{n})\backslash J(y_n))$ $<$ $\omega ( G_{Sed^{q}(L)}\backslash J(y_n) )$, call the order $L$ \textit{stable}.
Otherwise call $L$ \textit{periodic}. These new order are used by Havet and Thomass\'{e} to exhibit a second vertex with the SNP in tournaments
that do not have any sink. We will use them for the same purpose but for other classes of oriented graphs.

\section{Case of oriented graph missing a matching}
In this section, $D$ is an oriented graph missing a matching and $\Delta$ denotes its dependency digraph.
We begin by the following lemma:

\begin{lem}\cite{fidler}\label{structdep}
The maximum out-degree of $\Delta$ is one and the maximum in-degree of $\Delta$ is one. Thus $\Delta$
is composed of vertex disjoint directed paths and directed cycles.
\end{lem}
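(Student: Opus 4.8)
The plan is to deduce everything from the single combinatorial feature of a matching: since $D$ is missing a matching, its missing edges are pairwise vertex-disjoint, so \emph{every vertex of $D$ lies in at most one missing edge}. I would bound the out-degree and in-degree of $\Delta$ separately, each by contradiction, and in both cases manufacture a forbidden third missing edge whose endpoints already belong to two distinct edges of the matching.

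For the out-degree, suppose a missing edge $ab$ loses to two distinct missing edges, say $cd$ and $ef$. Writing out the definition of ``loses to,'' and recording that a missing edge is an unordered pair so that the losing relation may hold under either pairing of endpoints, I would split into the two cases according to how $a,b$ match to $e,f$. In the case $a\to c,\ b\to d$ and $a\to e,\ b\to f$, I use the avoidance conditions $f\notin N^{+}(a)\cup N^{++}(a)$ and $c\notin N^{+}(b)\cup N^{++}(b)$: the first together with $a\to c$ forbids the arc $c\to f$ (else $f\in N^{++}(a)$), and the second together with $b\to f$ forbids $f\to c$ (else $c\in N^{++}(b)$). Hence $\{c,f\}$ is a missing edge; but $c$ already lies in $cd$ and $f$ in $ef$, contradicting the matching. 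The other pairing is symmetric and produces the forbidden missing edge $\{c,e\}$ instead.

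For the in-degree the argument is dual: assuming two distinct missing edges $cd,ef$ both lose to $ab$, the relevant arcs now point \emph{into} $a$ and $b$, and the length-two paths used to contradict the avoidance conditions run through $a$ or $b$. For instance, with $c\to a,\ d\to b$ and $e\to a,\ f\to b$, the conditions $a\notin N^{+}(f)\cup N^{++}(f)$ and $b\notin N^{+}(c)\cup N^{++}(c)$ (via $f\to b$ and $c\to a$) forbid both $f\to c$ and $c\to f$, so again $\{c,f\}$ is forced to be a missing edge whose endpoints lie in distinct edges of the matching. With the two degrees of $\Delta$ both at most one, the final sentence is immediate: a digraph with maximum in-degree and out-degree one is a vertex-disjoint union of directed paths and directed cycles (digons being allowed as length-two cycles).

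I expect the only real subtlety to be bookkeeping rather than depth: keeping track of the labeling ambiguity in ``loses to'' (each lost-to edge may be matched to $ab$ in either of two ways), and invoking precisely the right avoidance condition to kill each of the two possible arcs of the cross pair. One should also note explicitly that $D$ being an oriented graph, hence free of digons, is exactly what lets ``no arc in either direction'' upgrade to ``is a missing edge,'' which is the hinge of every case.
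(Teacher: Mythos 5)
Your proof is correct and follows essentially the same route as the paper's: for two losing relations out of (or into) the same missing edge, you examine a cross pair of endpoints, use the two avoidance conditions to forbid both orientations of an arc between them, and conclude via the disjointness of the matching. The paper phrases this contrapositively (the cross pair cannot be a missing edge, so one arc exists and yields a forbidden length-two path), but the content, including the reliance on the absence of digons and the WLOG handling of the pairing ambiguity, is the same.
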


\begin{proof}
 Assume that $a_{1}b_{1}$ loses to $a_{2}b_{2}$ and $a_{1}b_1$ loses to $a'_{2}b'_{2}$, with $a_1\ra a_2$ and $a_1\ra a'_2$. The edge $a'_2b_2$ is not a missing edge
of $D$. If $a'_2\ra b_2$ then $b_1\ra a'_2\ra b_2$, a contradiction. If $b_2\ra a'_2$ then $b_1\ra b_2\ra a'_2$, a contradiction. Thus,
the maximum out-degree of $\Delta$ is one. Similarly, the maximum in-degree is one.
\end{proof}

In the following, $C=a_1b_1,...,a_kb_k$ denotes a directed cycle of $\Delta$, namely
$a_i\rightarrow a_{i+1}$, $b_{i+1}\notin N^{++}(a_i)\cup N^{+}(a_i)$, $b_i\rightarrow b_{i+1}$
and $a_{i+1}\notin N^{++}(b_i)\cup N^{+}(b_i)$, for all $i<k$. In \cite{fidler}, it is proved that $D[K(C)]$
has a vertex with the SNP. Here we prove that every vertex of $K(C)$ has the SNP in $D[K(C)]$.

\begin{lem}
(\cite{fidler})\label{howlose}
If k is odd then $a_k\rightarrow a_1$, $b_1\notin N^{++}(a_k)\cup N^+(a_k)$,
$b_k\rightarrow b_1$ and $a_1\notin N^{++}(b_k)\cup N^+(b_k)$. If k is even then $a_k\rightarrow b_1$, $a_1\notin N^{++}(a_k)\cup N^+(a_k)$,
$b_k\rightarrow a_1$ and $b_1\notin N^{++}(b_k)\cup N^+(b_k)$.

\end{lem}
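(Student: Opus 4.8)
The plan is to work entirely inside $K(C)=\{a_1,\dots,a_k,b_1,\dots,b_k\}$ and to exploit that $D$ misses a \emph{matching}: for $i\neq j$ none of the pairs $a_ia_j$, $b_ib_j$, $a_ib_j$ is a missing edge, so there is a genuine arc between any two vertices of $K(C)$ carrying different indices. First I would record the arcs that fall out of the defining conditions of ``$a_ib_i$ loses to $a_{i+1}b_{i+1}$'' for $i<k$: the clause $a_{i+1}\notin N^+(b_i)$ forces $b_i\not\ra a_{i+1}$, hence (the pair being non-missing) $a_{i+1}\ra b_i$; symmetrically $b_{i+1}\ra a_i$. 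I would also note once and for all that every hypothesis on the cycle is invariant under the simultaneous swap $a\leftrightarrow b$, so each statement I prove for the $a$'s holds verbatim for the $b$'s.

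The key reduction is that I do not need the full orientation of $D[K(C)]$: since ``$a_kb_k$ loses to $a_1b_1$'' is \emph{given}, it does so through exactly one of the two matchings of $\{a_k,b_k\}$ with $\{a_1,b_1\}$, namely the \emph{natural} one (matching $a_k$ to $a_1$ and $b_k$ to $b_1$) or the \emph{swapped} one (matching $a_k$ to $b_1$ and $b_k$ to $a_1$), and to decide which it suffices to exhibit a single arc incompatible with the wrong one. Concretely, the swapped reading requires $a_1\notin N^+(a_k)$, so producing the arc $a_k\ra a_1$ kills it and forces the natural reading; dually, the natural reading requires $b_1\notin N^+(a_k)$, so producing $a_k\ra b_1$ forces the swapped one. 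Thus the whole lemma reduces to determining, at index gap $k-1$, whether the end vertices are joined inside a thread or across the two threads, and this is exactly where the parity of $k$ will enter.

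The engine is an induction on the index gap $d$ proving: if $d$ is even then $a_{i+d}\ra a_i$ and $b_{i+d}\ra b_i$, while if $d$ is odd then $a_{i+d}\ra b_i$ and $b_{i+d}\ra a_i$ (for all $i$ with $i+d\le k$). The base $d=1$ is the cross-arc computation above. For the step I always use the second-neighbourhood clause $a_{i+d}\notin N^{++}(b_{i+d-1})$ coming from ``$a_{i+d-1}b_{i+d-1}$ loses to $a_{i+d}b_{i+d}$'', i.e. the absence of a $2$-path $b_{i+d-1}\ra w\ra a_{i+d}$: taking $w=a_i$ I get the even case from the (odd) cross arc $b_{i+d-1}\ra a_i$ at gap $d-1$, and taking $w=b_i$ I get the odd case from the (even) within-thread arc $b_{i+d-1}\ra b_i$ at gap $d-1$. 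These two choices interleave the even within-thread arcs with the odd cross arcs, each feeding the next.

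Finally, applying the conclusion at $d=k-1$: when $k$ is odd, $k-1$ is even, so $a_k\ra a_1$ and $b_k\ra b_1$, which rule out the swapped orientation and leave exactly the natural one, giving the four stated relations (the two non-neighbourhood clauses being precisely the content of the given losing relation read in this matching); when $k$ is even, $k-1$ is odd, so $a_k\ra b_1$ and $b_k\ra a_1$, which rule out the natural orientation and leave the swapped one. The main obstacle I expect is the bookkeeping that makes the interleaving close up: one must check that the intermediate vertex needed in each step ($a_i$ in the even case, $b_i$ in the odd case) is supplied by the immediately smaller gap and with the correct orientation, so that the single $2$-path condition at index $i+d-1$ does all the work. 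Getting these parities to match is the whole point, and it is what ultimately produces the odd/even dichotomy of the statement.
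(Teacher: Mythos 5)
Your argument is correct. Note that the paper itself gives no proof of this lemma --- it is imported from Fidler and Yuster --- so there is nothing internal to compare against; but your reconstruction is sound and is in exactly the spirit of the paper's own proof of the subsequent lemma computing $N^{+}(a_1)$ and $N^{-}(a_1)$ in $D[K(C)]$, which chains through the losing relations in the same way. The two points that make your induction work are both handled properly: (i) since the missing graph is a matching, every pair $a_ib_j$, $a_ia_j$, $b_ib_j$ with $i\neq j$ carries an arc, so each non-membership clause $v\notin N^{+}(u)\cup N^{++}(u)$ converts into an arc $v\rightarrow u$; and (ii) the step at gap $d$ only invokes the losing relations $a_jb_j$ to $a_{j+1}b_{j+1}$ for $j\leq k-1$, which are given, so there is no circularity with the relation $a_kb_k$ to $a_1b_1$ whose orientation is being determined. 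The closing observation --- that the given losing relation between $\{a_k,b_k\}$ and $\{a_1,b_1\}$ can only be witnessed by one of two pairings, and the arc $a_k\rightarrow a_1$ (resp.\ $a_k\rightarrow b_1$) excludes the wrong one --- correctly delivers the two remaining non-membership clauses in each parity case.
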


\begin{lem}\cite{fidler}\label{intervals}
$K(C)$ is an interval of $D$.
\end{lem}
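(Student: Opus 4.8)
The plan is to show that $K(C)$ looks the same, from the outside, to all of its members. First I would record the structural consequence of the matching hypothesis: if $w\notin K(C)$ and $u\in K(C)$, then $wu$ is \emph{not} a missing edge. Indeed $u$ is one of $a_i,b_i$, so $u$ already lies in the missing edge $a_ib_i$; since the missing edges form a matching, $u$ belongs to no other missing edge, so $wu$ is a genuine arc and, as $D$ is oriented, exactly one of $w\to u$, $u\to w$ holds. Hence, to prove $K(C)$ is an interval it suffices to fix an arbitrary external $w$ and show that either $u\to w$ for every $u\in K(C)$, or $w\to u$ for every $u\in K(C)$; this makes $N^+(u)\backslash K(C)$ and $N^-(u)\backslash K(C)$ independent of the choice of $u\in K(C)$.

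Next I would extract implications from the forbidden second out-neighborhoods. For each $i<k$ the relation ``$a_ib_i$ loses to $a_{i+1}b_{i+1}$'' gives $b_{i+1}\notin N^+(a_i)\cup N^{++}(a_i)$ and $a_{i+1}\notin N^+(b_i)\cup N^{++}(b_i)$. Taking the external $w$ as the middle vertex of a would-be $2$-path, I obtain $a_i\to w\Rightarrow b_{i+1}\to w$ and $b_i\to w\Rightarrow a_{i+1}\to w$ (each using that $wb_{i+1}$, resp. $wa_{i+1}$, is a genuine arc, and that the relevant vertex is not already a first out-neighbor). Call $a_i$ \emph{positive} if $a_i\to w$ and \emph{negative} if $w\to a_i$, and likewise for $b_i$. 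The two implications say: a positive $a_i$ forces a positive $b_{i+1}$, and a positive $b_i$ forces a positive $a_{i+1}$; so positivity is pushed one index forward while switching between the $a$-slot and the $b$-slot. Lemma \ref{howlose} supplies the wrap-around step from index $k$ to index $1$: for odd $k$ it again switches slots (positive $a_k$ forces positive $b_1$, positive $b_k$ forces positive $a_1$), whereas for even $k$ it keeps the slot (positive $a_k$ forces positive $a_1$, positive $b_k$ forces positive $b_1$).

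The heart of the argument is a parity count. Starting from a positive $a_i$ and pushing positivity once around the cycle back to index $i$, I traverse $k$ steps, of which the $k-1$ internal ones switch slots and the single wrap step switches slots exactly when $k$ is odd. Thus the total number of slot switches is $k$ for odd $k$ and $k-1$ for even $k$, which is odd in both cases. Consequently positivity of $a_i$ propagates, through a valid chain of the above implications, to positivity of $b_i$; that is, $a_i\to w$ implies $b_i\to w$. The symmetric chain starting from a positive $b_i$ gives the converse, so $a_i$ and $b_i$ carry the same sign for every $i$. Combined with the forward propagation, a single positive slot forces every slot positive, and if no slot is positive then every slot is negative; so $w$ relates to all of $K(C)$ uniformly, which is exactly what was needed.

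I expect the main obstacle to be the bookkeeping of the wrap-around step: one must invoke Lemma \ref{howlose} to see that the $a/b$ roles get interchanged precisely in the even case, and then verify that the parities conspire so that the number of $a\leftrightarrow b$ switches around the whole cycle is odd irrespective of $k$. This oddness is exactly what forces the two endpoints of each missing edge of $C$ to be indistinguishable from the outside, and hence what makes $K(C)$ an interval of $D$.
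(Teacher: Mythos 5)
Your proof is correct and takes essentially the same route as the paper's: the core implication ($a_i\to w$ forces $b_{i+1}\to w$, derived from the losing relations plus the matching property, with Lemma \ref{howlose} supplying the wrap-around) is identical, and your sign-propagation around the cycle is just the element-wise version of the paper's cyclic chain of inclusions $N^{+}(a_{1})\backslash K(C)\subseteq N^{+}(b_{2})\backslash K(C)\subseteq\cdots\subseteq N^{+}(a_{1})\backslash K(C)$, which forces all these sets to be equal. Your explicit parity count of slot switches makes precise what the paper leaves implicit in how its chain alternates between the $a$'s and $b$'s before closing, but it is the same argument.
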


\begin{proof}
Let $f\notin K(C)$. Then $f$ is adjacent to every vertex in $K(C)$.
If $a_1\ra f$ then $b_2\ra f$, since otherwise $b_2\in N^{++}(a_{1})\cup N^{+}(a_{1})$ which is a contradiction.
So $N^{+}(a_{1})\bks K(C)\sub N^{+}(b_{2})\bks K(C)$. Applying this to every losing relation of $C$ yields
$N^{+}(a_{1})\bks K(C)\sub N^{+}(b_{2})\bks K(C)\sub N^{+}(a_{3})\bks K(C)...\sub N^{+}(b_{k})\bks K(C)\sub N^{+}(b_{1})\bks K(C)
\sub N^{+}(a_{2})\bks K(C)...\sub N^{+}(a_{k})\bks K(C)\sub N^{+}(a_{1})\bks K(C)$ if $k$ is even. So these inclusion are equalities.
An analogous argument proves the same result for odd cycles.
\end{proof}

\begin{lem}
 In $D[K(C)]$ we have:\\
 If $k$ is odd then: $$N^{+}(a_{1})=N^{-}(b_{1})=\{a_{2},b_{3},\cdots,a_{k-1},b_{k}\}$$
$$N^{-}(a_{1})=N^{+}(b_{1})=\{b_{2},a_{3},\cdots,b_{k-1},a_{k}\},$$
If k is even then: $$N^{+}(a_{1})=N^{-}(b_{1})=\{a_{2},b_{3},\cdots,b_{k-1},a_{k}\}$$
$$N^{-}(a_{1})=N^{+}(b_{1})=\{b_{2},a_{3},\cdots,a_{k-1},b_{k}\}.$$
\end{lem}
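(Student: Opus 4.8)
The plan is to pin down every arc of $D[K(C)]$ incident to $a_1$, then to $b_1$, and read off the four neighbourhoods. Two features of the hypotheses do all the work. First, since $D$ misses only the matching $\{a_ib_i\}$, every pair of vertices of $K(C)$ other than the pairs $a_ib_i$ is a genuine arc, so for $u\neq b_1$ ruling out $u\to a_1$ forces $a_1\to u$. Second, each clause ``$y\notin N^+(x)\cup N^{++}(x)$'' occurring in the losing relations and in Lemma \ref{howlose} says exactly that there is no directed path of length $\le 2$ from $x$ to $y$; one must keep the full union $N^+\cup N^{++}$ (rather than $N^{++}$ alone, which would forbid only paths of length exactly two). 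I would first extract the ``consecutive'' arcs from the $N^+$ parts of these clauses: for $1\le i<k$ one gets $a_i\to a_{i+1}$, $b_i\to b_{i+1}$, $a_{i+1}\to b_i$ and $b_{i+1}\to a_i$, together with the four wrap arcs from Lemma \ref{howlose} (for odd $k$: $a_k\to a_1$, $a_1\to b_k$, $b_k\to b_1$, $b_1\to a_k$, and the analogous four for even $k$).

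Next I would record two one-line propagation rules coming from the ``no path of length $\le 2$'' part, noting that all the $2$-paths I build lie inside $K(C)$, so the clauses (stated in $D$) apply to them. Rule (B): if $a_1\to z$ is already known and some clause gives $z\notin N^+(u)\cup N^{++}(u)$, then $u\to a_1$ would create the path $u\to a_1\to z$, which is impossible, so $a_1\to u$. Rule (A), symmetrically: if $v\to a_1$ is known and a clause gives $u\notin N^+(v)\cup N^{++}(v)$, then $a_1\to u$ would create $v\to a_1\to u$, so $u\to a_1$. Read through (B), the consecutive clause $b_{i+1}\notin N^+(a_i)\cup N^{++}(a_i)$ says ``$a_1\to b_{i+1}\Rightarrow a_1\to a_i$'' and $a_{i+1}\notin N^+(b_i)\cup N^{++}(b_i)$ says ``$a_1\to a_{i+1}\Rightarrow a_1\to b_i$'', i.e.\ each is an index‑lowering step; read through (A) the same clauses become index‑raising steps.

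The heart is then two inductions. Running (B) downward from the wrap out-arc ($a_1\to b_k$ for odd $k$, $a_1\to a_k$ for even $k$) produces out-arcs $a_1\to\cdot$ at the descending indices $k-1,k-2,\dots,2$, alternating between the $a$-- and the $b$--vertex, which yields $a_1\to a_j$ for even $j$ and $a_1\to b_j$ for odd $j\ge 3$. Running (A) upward from the consecutive arc $b_2\to a_1$ produces $a_3\to a_1,\ b_4\to a_1,\ a_5\to a_1,\dots$, i.e.\ $b_j\to a_1$ for even $j$ and $a_j\to a_1$ for odd $j\ge 3$. A short parity check shows that for each index $j\in\{2,\dots,k\}$ these two chains put exactly one of $a_j,b_j$ into $N^+(a_1)$ and the other into $N^-(a_1)$; hence they partition $K(C)\setminus\{a_1,b_1\}$ and give precisely the two displayed sets $N^+(a_1)$ and $N^-(a_1)$ (the listed endpoints matching the parity of $k$). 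Finally, the whole configuration is symmetric under interchanging $a_i\leftrightarrow b_i$ for every $i$: the losing relation is unchanged when the two endpoints of each missing edge are swapped, and this swap carries wrap arcs to wrap arcs. Applying the identical argument to $b_1$ therefore gives $N^+(b_1)$ and $N^-(b_1)$ as the $a\leftrightarrow b$ images of $N^+(a_1)$ and $N^-(a_1)$; comparing the explicit lists gives $N^+(b_1)=N^-(a_1)$ and $N^-(b_1)=N^+(a_1)$, which is the remaining content of the statement.

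The step I expect to be the real obstacle is the bookkeeping in the two inductions: one must verify that the descending chain from the wrap arc and the ascending chain from $b_2\to a_1$ between them orient every edge at $a_1$ exactly once and with consistent parity, and that the two parities of $k$ feed the correct wrap out-arc into rule (B). Everything else — the reality of the non-matching edges and the one-line deductions (A) and (B) — is routine, provided the ``length $\le 2$'' reading of the hypotheses is kept in force, which is the single subtlety that must not be overlooked.
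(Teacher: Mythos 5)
Your proof is correct and takes essentially the same route as the paper's: your rule (B) run downward from the wrap arc $a_1\to b_k$ and your rule (A) run upward from $b_2\to a_1$ are exactly the two ``and so on'' inductions in the paper's proof, and the $a_i\leftrightarrow b_i$ symmetry you invoke to handle $b_1$ (and the even case) is what the paper dismisses with ``we use the same argument.'' No gap; your version just makes the propagation rules and the parity bookkeeping explicit.
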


\begin{proof}
Suppose that $k$ is odd. Set $K:=K(C)$. Then $b_{k}\in N^{+}_{D[K]}(a_{1})$ by lemma \ref{howlose}.
Since $a_{k-1}b_{k-1}$ loses to $a_{k},b_{k}$ and $(a_{1},b_{k})\in E(D)$ then $(a_{1},a_{k-1})\in E(D)$ and so $a_{k-1} \in N^{+}_{D[K]}(a_{1})$, since otherwise $(a_{k-1},a_{1})\in E(D)$ and so $b_{k}\in N^{++}_{D[K]}(a_{k-1})$, which is a contradiction to the definition of the losing relation $a_{k-1}b_{k-1}\ra a_{k}b_{k}$. And so on $b_{k-2},a_{k-3},...,b_{3},a_{2}\in N^{+}_{D[K]}(a_{1})$. Again, since $a_{1}b_{1}$ loses to $a_{2},b_{2}$ then $b_{2}\in N^{-}_{D[K]}(a_{1})$. Since $a_{2}b_{2}$ loses to $a_{3},b_{3}$ and
$(b_{2},a_{1})\in E(D)$ then $(a_{3},a_{1})\ in E(D)$ and so $a_{3} \in N^{-}_{D[K(C)]}(a_{1})$. And so on, $b_{4},a_{5},...,b_{k-1},a_{k}\in N^{-}_{D[K]}(a_{1})$. We use the same argument for finding $N^{+}_{D[K]}(b_{1})$ and $N^{-}_{D[K]}(b_{1})$.
Also we use the same argument when $k$ is even.
\end{proof}

\begin{lem}\label{snp2}

In $D[K(C)]$ we have:
$N^{+}(a_i)=N^{-}(b_i)$, $N^{-}(a_i)=N^{+}(b_i)$,\\
$N^{++}(a_i)=N^{-}(a_i)\cup\{b_i\}\backslash\{b_{i+1}\}$ and $N^{++}(b_i)=N^{-}(b_i)\cup\{a_i\}\backslash\{a_{i+1}\}$
for all $i=1,...,k$ where $a_{k+1}:=a_1$, $b_{k+1}:=b_1$ if $k$ is odd and $a_{k+1}:=b_1$, $b_{k+1}:=a_1$ if $k$ is even.

\end{lem}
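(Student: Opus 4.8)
The plan is to bootstrap from the previous lemma --- which gives the explicit first in- and out-neighborhoods of $a_1$ and $b_1$ in $H:=D[K(C)]$ --- propagate this data to every index $i$ by the rotational symmetry of $C$, and then read off the second out-neighborhoods from the losing relations by a short count of length-two paths.

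First I would record that, since $D$ is missing a matching, the only missing edge incident to $a_i$ (resp. $b_i$) inside $K(C)$ is $a_ib_i$; hence $N^+(a_i)$ and $N^-(a_i)$ partition $K(C)\bks\{a_i,b_i\}$, and similarly for $b_i$. To obtain the equalities $N^+(a_i)=N^-(b_i)$ and $N^-(a_i)=N^+(b_i)$ for every $i$, I would note that these are exactly the previous lemma when $i=1$, and for general $i$ invoke the shift $a_j\mapsto a_{j+1}$, $b_j\mapsto b_{j+1}$ (indices read with the convention $a_{k+1},b_{k+1}$ of the statement). This shift is an automorphism of $H$: for odd $k$ because $C$ consists of the two disjoint directed $k$-cycles $a_1\ra\cdots\ra a_k\ra a_1$ and $b_1\ra\cdots\ra b_k\ra b_1$ with missing edges $a_jb_j$, and for even $k$ because $C$ is the single directed $2k$-cycle $a_1\ra\cdots\ra a_k\ra b_1\ra\cdots\ra b_k\ra a_1$ whose missing edges join antipodal vertices; both facts are read off from Lemma \ref{howlose}. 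Applying the shift to the $i=1$ data yields the explicit form of $N^{\pm}(a_i)$ and $N^{\pm}(b_i)$, and in particular the first two equalities of the lemma.

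To compute $N^{++}(a_i)$, observe first that by the partition above every vertex of $K(C)$ other than $a_i,b_i$ is adjacent to $a_i$, so $N^{++}(a_i)\sub N^-(a_i)\cup\{b_i\}$. I would then check three things: (i) $b_i\in N^{++}(a_i)$, since $a_i\ra a_{i+1}$ (from the losing relation $a_ib_i\ra a_{i+1}b_{i+1}$) and $a_{i+1}\ra b_i$ (as $a_{i+1}\notin N^+(b_i)$ while $a_{i+1},b_i$ are adjacent), giving the path $a_i\ra a_{i+1}\ra b_i$ with $b_i\notin N^+(a_i)$; (ii) $b_{i+1}\notin N^{++}(a_i)$, which is immediate, since the same losing relation gives $b_{i+1}\notin N^+(a_i)\cup N^{++}(a_i)$; and (iii) every $w\in N^-(a_i)\bks\{b_{i+1}\}$ lies in $N^{++}(a_i)$. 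For (iii) I would use that, by the explicit lists from the previous step, the vertices of each directed cycle alternate between $N^+(a_i)$ and $N^-(a_i)$, the only breaks occurring at $a_i$ and $b_i$; hence the predecessor of any such $w$ lies in $N^+(a_i)$ and points to $w$, the single exception being $w=b_{i+1}$, whose predecessor is $b_i\notin N^+(a_i)$. Combining (i)--(iii) gives $N^{++}(a_i)=N^-(a_i)\cup\{b_i\}\bks\{b_{i+1}\}$, and the formula for $N^{++}(b_i)$ follows verbatim after interchanging $a$ and $b$ (now using $b_i\ra b_{i+1}$ and $a_{i+1}\notin N^+(b_i)\cup N^{++}(b_i)$).

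The main obstacle is the index and parity bookkeeping concentrated in the second paragraph: the twist at the wrap of $C$ (encoded by the convention $a_{k+1},b_{k+1}$) must be handled with care, because for even $k$ one cannot simply re-apply the previous lemma to a rotated copy of $C$ --- the twist would then sit in the interior of the cycle rather than at its wrap --- so the shift has to be justified structurally, as an automorphism of the $2k$-cycle. Once the explicit neighborhoods are available for every $i$, the remaining steps (i)--(iii) are routine, the only point where the losing relation (rather than mere adjacency) is needed being the identification of $b_{i+1}$, respectively $a_{i+1}$, as the unique excluded vertex.
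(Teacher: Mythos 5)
Your proposal is correct and follows essentially the same route as the paper: the first two equalities come from the previous lemma together with the rotational symmetry of the cycle $C$ (the paper reduces to $i=1$ by this same symmetry), and the second out-neighborhoods are obtained exactly as you do, from the losing relations giving $b_{i+1}\notin N^{+}(a_i)\cup N^{++}(a_i)$, the path $a_i\ra a_{i+1}\ra b_i$ putting $b_i$ into $N^{++}(a_i)$, and the arcs $a_j\ra a_{j+1}$, $b_j\ra b_{j+1}$ combined with the explicit neighborhood lists to capture the rest of $N^{-}(a_i)$. Your write-up is somewhat more explicit about the predecessor/alternation argument and the even-$k$ twist, but it is the same proof.
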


\begin{proof}

The first part is due to the previous lemma and the symmetry in these cycles.
For the second part it is enough to prove it for $i=1$ and $a_1$. Suppose first
that $k$ is odd. By definition of losing relation between $a_1b_1$ and $a_2b_2$
we have $b_2\notin N^{++}(a_1)\cup N^{+}(a_1)$. Moreover $a_1\rightarrow a_2\rightarrow b_1$,
whence $b_1\in N^{++}(a_1)$. Note that for $i=1,...,k-1$, $a_i\rightarrow a_{i+1}$ and
$b_i\rightarrow b_{i+1}$. Combining this with the previous lemma we find that
$N^{++}(a_1)=N^{-}(a_1)\cup\{b_1\}\backslash\{b_{2}\}$. Similar argument is used when $k$ is even.

\end{proof}

So we have:

\begin{lem}\label{snp2}
$d^{++}(v)=d^+(v)=d^-(v)=k-1$ for all $v\in K(C)$.
\end{lem}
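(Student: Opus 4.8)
The plan is to read off the conclusion directly from the structural description established in the previous lemma (the one computing $N^+$, $N^-$, and $N^{++}$ of each $a_i$ and $b_i$ inside $D[K(C)]$). The statement claims three equalities of cardinalities, so I would handle the out-degree, the in-degree, and the second-out-degree in turn, and it suffices to treat a single vertex, say $a_1$, since the symmetry of the cycle $C$ (the cyclic structure $a_i b_i$ with the losing relations and Lemma \ref{howlose}) makes every vertex of $K(C)$ play the same role.

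First I would establish $d^+(a_1)=d^-(a_1)=k-1$. From the explicit lists in the lemma preceding this one, $N^+(a_1)=N^-(b_1)$ and $N^-(a_1)=N^+(b_1)$, and each of these sets is exhibited as an explicit list of $k-1$ vertices (for instance, when $k$ is odd, $N^+(a_1)=\{a_2,b_3,\ldots,a_{k-1},b_k\}$, which alternates through the $k-1$ indices other than $1$). So I would just count: each of $N^+(a_1)$ and $N^-(a_1)$ contains exactly one vertex from each pair $a_i b_i$ with $i\neq 1$, giving $k-1$ vertices, and these lists are disjoint and together exhaust $K(C)\backslash\{a_1,b_1\}$ plus one of $\{b_1\}$ — more precisely the two lists partition the $2(k-1)$ vertices of $K(C)\backslash\{a_1,b_1\}$. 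Hence $d^+(a_1)=d^-(a_1)=k-1$.

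Next I would compute $d^{++}(a_1)$ using Lemma \ref{snp2} (the immediately preceding one), which gives $N^{++}(a_1)=\bigl(N^-(a_1)\cup\{b_1\}\bigr)\backslash\{b_2\}$, with the convention on $a_{k+1},b_{k+1}$. Since $b_1\notin N^-(a_1)$ (as $N^-(a_1)=N^+(b_1)$ does not contain $b_1$, there being no loop or digon) and $b_2\in N^-(a_1)$, adding one vertex and deleting one vertex leaves the cardinality unchanged, so $d^{++}(a_1)=|N^-(a_1)|=k-1$. The only care needed is to confirm the membership facts $b_1\notin N^-(a_1)$ and $b_2\in N^-(a_1)$, both of which are read off the explicit list in the earlier lemma ($b_2$ is the first listed element of $N^-(a_1)=N^+(b_1)$, and $b_1$ is absent).

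Finally I would invoke the symmetry to conclude $d^{++}(v)=d^+(v)=d^-(v)=k-1$ for every $v\in K(C)$, since the argument for $a_1$ applies verbatim to each $a_i$ and each $b_i$ after relabelling along the cycle. I do not expect a genuine obstacle here: the work has all been front-loaded into the two preceding lemmas, and this statement is essentially a bookkeeping corollary. The only mild subtlety is keeping the parity conventions ($a_{k+1}:=a_1$ versus $a_{k+1}:=b_1$ depending on whether $k$ is odd or even) straight when verifying that the deleted vertex $b_{i+1}$ genuinely lies in $N^-(a_i)$ and the added vertex $b_i$ genuinely does not, so that the $+1,-1$ exchange preserves the count in both parity cases.
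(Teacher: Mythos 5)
Your proposal is correct and matches the paper's (implicit) argument: the paper states this lemma with no proof beyond the preceding two lemmas, presenting it as an immediate counting consequence of the explicit descriptions of $N^+$, $N^-$, and $N^{++}=N^-\cup\{b_i\}\backslash\{b_{i+1}\}$ in $D[K(C)]$. Your bookkeeping — each of the explicit lists has $k-1$ elements, and the add-one/remove-one exchange for $N^{++}$ preserves cardinality since $b_{i+1}\in N^-(a_i)$ and $b_i\notin N^-(a_i)$ — is exactly the verification the paper leaves to the reader.
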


Let $P=a_{1}b_{1},a_{2}b_{2},\cdots ,a_{k}b_{k}$ be a connected component of $\Delta$, which is also a  maximal path in $\Delta$,
namely $a_i\rightarrow a_{i+1},b_i\rightarrow b_{i+1}$ for $i=1,...,k-1$. Since $a_1b_1$ is a good edge then $(a_1,b_1)$ or $(b_1,a_1)$
is a convenient orientation. If $(a_1,b_1)$ is a convenient orientation, then we orient $(a_i,b_i)$ for $i=1,...,k$. Otherwise, we
orient $a_ib_i$ as $(b_i,a_i)$. We do this for every such a path of $\Delta$. Denote the set of these new arcs by $F$.
Set $D'=D+F$.\\

Since we have oriented all the missing edges of $D$ that form the connected components of $\Delta$ that are paths, then they are no longer missing edges of $D'$ and thus, the dependency digraph of $D'$ is composed of only directed cycles. Then by lemma \ref{intervals} we have:

\begin{lem}
 $D'$ is a good digraph.
\end{lem}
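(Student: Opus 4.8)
The goal is to show that $D'$ is a good digraph, which by definition means that each set $K(\xi)$, for $\xi$ a connected component of the interval graph $\mathcal{I}_{D'}$, is an interval of $D'$. The plan is to reduce this to Lemma~\ref{intervals}, which tells us that $K(C)$ is an interval of $D$ for every directed cycle $C$ of the dependency digraph. The first step is to exploit the sentence just established: since all missing edges that lay on path-components of $\Delta$ have been oriented in forming $D' = D + F$, the only remaining missing edges of $D'$ are those lying on the directed cycles of $\Delta$, and I would argue that the dependency digraph $\Delta'$ of $D'$ consists exactly of these cycles. One must check that orienting the path-edges does not create any \emph{new} losing relations among the surviving cyclic missing edges; here the key point is that the losing relation for a missing edge $ab$ depends only on out-neighborhoods of the endpoints, and the arcs added in $F$ connect vertices that (by Lemma~\ref{intervals}) behave uniformly with respect to outside vertices, so the cyclic losing relations are preserved and no spurious ones appear.

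Next I would analyze the interval graph $\mathcal{I}_{D'}$. Because $\Delta'$ is a disjoint union of directed cycles $C_1, \dots, C_m$, each cycle $C_j$ is its own connected component of $\Delta'$, so $K(C_j)$ is well-defined. I would then show that the sets $K(C_1), \dots, K(C_m)$ are pairwise disjoint: two distinct cycles share no missing edge, and since the missing graph of $D$ is a matching, the vertex sets $K(C_j)$ are in fact pairwise disjoint (each vertex lies on at most one missing edge). Consequently, the intersection graph $\mathcal{I}_{D'}$ has no edges, each component $\xi$ is a single cycle $C_j$, and $K(\xi) = K(C_j)$.

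Having identified $K(\xi) = K(C_j)$, the last step is to verify that each $K(C_j)$ is an interval of $D'$. By Lemma~\ref{intervals}, $K(C_j)$ is an interval of $D$. Passing from $D$ to $D' = D + F$ only adds arcs inside the various $K(C_i)$'s (the arcs of $F$ orient missing edges that lie on path-components, whose endpoints all sit inside their own respective blocks), so for any vertex $f \notin K(C_j)$ the adjacency of $f$ to $K(C_j)$ is unchanged, and the uniformity condition $N^+_{D'}(u) \setminus K(C_j) = N^+_{D'}(v) \setminus K(C_j)$ (and the in-neighborhood analogue) for $u,v \in K(C_j)$ follows directly from the corresponding property in $D$. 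Thus each $K(\xi)$ is an interval of $D'$, which is precisely the definition of a good digraph.

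The main obstacle I anticipate is the first step: carefully justifying that the dependency digraph $\Delta'$ of $D'$ is \emph{exactly} the union of the cyclic components of $\Delta$, with no relations gained or lost. One must confirm both that each cyclic losing relation still holds in $D'$ (the oriented path-edges do not supply a new short path to the forbidden second-neighborhood vertices) and that orienting the path-edges creates no new missing-edge losing relations among the cyclic edges. This requires tracking how the added arcs in $F$ interact with the losing-relation conditions, using the convenient-orientation property guaranteed by the goodness of $a_1b_1$ on each path, and is where the bulk of the verification lies.
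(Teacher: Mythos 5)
Your proposal follows the paper's argument exactly—the paper's entire justification is the single sentence preceding the lemma, asserting that after adding $F$ the dependency digraph of $D'$ consists only of the directed cycles of $\Delta$, so that Lemma~\ref{intervals} (together with the pairwise disjointness of the $K(C)$'s, which holds because the missing graph is a matching) yields the interval property—and you actually supply more of the verification than the paper does. One small correction to your sketch: the persistence of the cyclic losing relations in $D'$ is not really a consequence of Lemma~\ref{intervals}, but of the simpler fact that every arc of $F$ has both endpoints on a path component of $\Delta$, so no arc of $F$ can put $b_{i+1}$ into $N^{+}_{D'}(a_i)\cup N^{++}_{D'}(a_i)$ (or $a_{i+1}$ into $N^{+}_{D'}(b_i)\cup N^{++}_{D'}(b_i)$), since those vertices lie on a cycle and, the missing graph being a matching, are incident to no edge of $F$.
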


Now, we are ready to prove the following statement:

\begin{thm}
 Every feed vertex of $D'$ has the SNP in D and $D'$.
\end{thm}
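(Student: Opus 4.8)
The plan is to first read off the second neighborhood property inside a good median order of $D'$ and then carry it back to $D$ by analysing the arcs of $F$ one at a time; the transfer at the feed vertex is where the real work lies.

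\textbf{Step 1 (SNP in $D'$).} Since $D'$ is good, fix a good median order $L=x_1\cdots x_n$ of $D'$ (take $\w=1$) with feed vertex $f=x_n$. Because $D$ is missing a matching, each vertex lies in at most one missing edge, so the sets $K(C)$ of the cycles $C$ of $\Delta$ are pairwise disjoint; as the missing edges of $D'$ are exactly the edges of these cycles, every connected component of $\mathcal I_{D'}$ is a single cycle and $J(f)$ is either $\{f\}$ or $K(C)$ for the unique cycle $C$ with $f\in K(C)$. In the first case $f$ trivially has the SNP in $D'[\{f\}]$ (both degrees are $0$); in the second, the cycle edges are still missing in $D'$, so $D'[K(C)]=D[K(C)]$, and by the lemma giving $d^{+}(v)=d^{++}(v)=k-1$ for all $v\in K(C)$ the vertex $f$ has the SNP in $D'[J(f)]$. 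In either case Theorem \ref{prince} yields that $f$ has the SNP in $D'$.

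\textbf{Step 2 (the transfer mechanism).} Write $D'=D+F$ and add the arcs of $F$ one missing edge at a time, following each maximal path $a_1b_1,\dots,a_kb_k$ of $\Delta$ in order. I would first establish the propagation fact that each edge is oriented convenient in the current digraph: $a_1b_1$ is good in $D$ (it has in-degree $0$ in $\Delta$), and once $a_1\ra b_1$ has been added the relation $a_1b_1\ra a_2b_2$ of $\Delta$ is destroyed (indeed $a_1\ra b_1\ra b_2$ now places $b_2$ in $N^{++}(a_1)$), so $a_2b_2$ becomes good with the same convenient orientation, and so on. Granting this, the key local lemma is: if $(a,b)$ is a convenient orientation of a good missing edge of a digraph $H$, then adding the arc $(a,b)$ changes neither $d^{+}$ nor $d^{++}$ at any vertex $v\neq a$. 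The proof is a direct check — a new directed $2$-path must use the new arc, and since no out-arc is added at $v\neq a$ such a path has the form $v\ra a\ra b$; condition $(i)$ of convenience guarantees $b\in N^{+}_H(v)\cup N^{++}_H(v)$ already, so no second out-neighbour is gained, and none is lost because $N^{+}(v)$ is unchanged.

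\textbf{Step 3 (conclusion and the obstacle).} Iterating the local lemma along the construction of $D'$, if $f$ is \emph{not} the tail of any arc of $F$ then $N^{+}_{D}(f)=N^{+}_{D'}(f)$ and $N^{++}_{D}(f)=N^{++}_{D'}(f)$, so the SNP of $f$ in $D'$ gives the SNP of $f$ in $D$; this already covers every case except $f=a_j$, the tail of an oriented edge $a_jb_j$ (note that then $f$ is whole in $D'$ and $d^{+}_{D'}(f)=d^{+}_{D}(f)+1$). The hard case is exactly this one. Here the only step that moves $f$'s degrees is the addition of $(a_j,b_j)$, and one computes $d^{++}_{D'}(f)=d^{++}_{D}(f)-\varepsilon+|P|$, where $\varepsilon=1$ if $b_j\in N^{++}_D(a_j)$ and $\varepsilon=0$ otherwise, and $P=N^{+}_D(b_j)\setminus\big(N^{+}_D(a_j)\cup N^{++}_D(a_j)\cup\{a_j,b_j\}\big)$ is the set of genuinely new second out-neighbours created through $b_j$. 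Combining this with the SNP of $f$ in $D'$, the SNP of $f$ in $D$ follows as soon as $|P|\le 1+\varepsilon$. To prove this bound I would use that $D$ misses a matching, so $a_j$ and $b_j$ are each adjacent to every other vertex; hence every $w\in P$ satisfies $w\ra a_j$ and $b_j\ra w$, and the bounded-degree structure of $\Delta$ (Lemma \ref{structdep}) together with the losing relation out of $a_jb_j$ should force $w$ to be essentially $b_{j+1}$. I expect this final estimate — controlling the second out-neighbourhood that the new arc creates at the feed vertex, and in particular the sub-case $j=k$, where there is no outgoing losing relation to appeal to — to be the main obstacle of the proof.
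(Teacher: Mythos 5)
Your Steps 1 and 2 are essentially sound, and they correctly dispose of every case in which the feed vertex $f$ is never the tail of an arc of $F$ (namely $f$ whole, $f\in K(C)$ for a cycle $C$ of $\Delta$, and $f=b_i$): your one-arc-at-a-time local lemma is a clean repackaging of the paper's ``either $(x,y)$ is a convenient orientation, or some missing edge $rs$ loses to $xy$ and then $f\ra s\ra y$ in $D$'' argument, and it handles $f=b_i$ more directly than the paper does. One small caveat: to get that $(a_i,b_i)$ is convenient in the intermediate digraph for $i\ge 2$ you need more than ``the losing relation is destroyed''; you must also invoke the maximum in-degree one of Lemma \ref{structdep} (together with the fact that $N^+\cup N^{++}$ only grows under arc addition) to rule out violators of condition $(i)$ other than $a_{i-1}$. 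This is provable, but it is not automatic.

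The genuine gap is the case you flag yourself: $f=a_j$. For $j<k$ your target is attainable and is exactly what the paper proves, namely $P\subseteq\{b_{j+1}\}$: any $w\in P$ has $b_j\ra w$ and $w\notin N^+_D(a_j)\cup N^{++}_D(a_j)$; since $D$ misses a matching, $w$ and $a_{j+1}$ are adjacent unless $w=b_{j+1}$, and $a_{j+1}\ra w$ would put $w\in N^{++}_D(a_j)$ while $w\ra a_{j+1}$ would put $a_{j+1}\in N^{++}_D(b_j)$, contradicting the losing relation $a_jb_j\ra a_{j+1}b_{j+1}$. For $j=k$, however, no bound of the form $|P|\le 1+\varepsilon$ is proved in the paper, and it does not follow from convenience alone (a good missing edge $ab$ with convenient orientation $(a,b)$ can have arbitrarily many out-neighbours of $b$ outside $N^+(a)\cup N^{++}(a)$), so your route genuinely stalls there. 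The paper's resolution is a reorientation trick, absent from your proposal, which is where the feed-vertex hypothesis is really used beyond Step 1: since $f=a_k$ is the last vertex of $L$, the arc $(a_k,b_k)$ is a backward arc of $L$, so reversing it yields a digraph $D''$ of which $L$ is still a good median order; in $D''$ one has $N^+_{D''}(f)=N^+_D(f)$ and $J(f)=\{f\}$, Theorem \ref{prince} gives the SNP of $f$ in $D''$, and the transfer $N^{++}_{D''}(f)\subseteq N^{++}_D(f)$ goes through exactly as in your ``not a tail'' cases (here the out-degree zero of $a_kb_k$ in $\Delta$ guarantees that the losing edge $rs$ encountered in the transfer is never the missing edge at $f$). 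You need to add this step, which also covers $k=1$, or else supply an independent proof of the bound on $|P|$ valid at a feed vertex.
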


\begin{proof}
Let $L$ be a good median order of $D'$ and let $f$ denote its feed vertex.
We have $|N^+_{D'}(f)\backslash J(f)|\leq |G_L^{D'}\backslash J(f)|$ by theorem \ref{prince}.\\

\par Suppose that $f$ is not incident to any new arc of $F$. Then $J(f)=\{f\}$ or $J(f)=K(C)$ ( in $D$ and $D'$) for some cycle $C$
of $\Delta$, $N^+_{D'}(f)=N^+(f)$ and $f$ has the SNP in $D[J(f)]$. Let $y\in N^{++}_{D'}(f)\bks J(f)$. There is a vertex $x$ such that $f\ra x \ra y \ra f$ in $D'$. Note that the arcs $(f, x)$ and $(y, f)$ are in $D$. If $(x,y) \in D$ or is a convenient orientation then $y\in N^{++}(f)$. Otherwise, there is
a missing edge $rs$ that loses to $xy$, namely $s\ra y$ and $x\notin N^{++}(s)\cup N^+(s)$. But $fs$ is not a missing edge then we must have $(f,s)\in D$. Thus $y\in N^{++}(f)$. Hence $N^{++}_{D'}(f)\bks J(f)\sub N^{++}(f)\bks J(f)$. Thus $|N^+(f)|=|N^+_{D'}(f)|= |N^+_{D'}(f)\backslash J(f)| + |N^+_{D'}(f)\cap J(f)| \leq
|G_L^{D'}\backslash J(f)| + |N^{++}_{D'}(f)\cap J(f)| \leq |N^{++}(f)\bks J(f)| + |N^{++}_{D[J(f)]}(f)|= |N^{++}(f)|$.\\

\par Suppose that $F$ is incident to a new arc of $F$. Then there is a path $P=a_{1}b_{1},a_{2}b_{2},\cdots ,a_{k}b_{k}$ in $\Delta$, which is also a connected component $\Delta$, namely $a_t\rightarrow a_{t+1},b_t\rightarrow b_{t+1}$ for $t=1,...,k-1$, such that $f=a_i$ or $f=b_i$. We may suppose without loss of generality that $(a_t,b_t)\in D',$ $ \forall t\in \{1,...,k\}$. Suppose first that $f=a_i$ and $i<k$.
Then $f$ gains only $b_i$ as a first out-neighbor and $b_{i+1}$ as a second out-neighbor. Indeed, let
$y\in N^{++}_{D'}(f)\backslash \{b_{i+1}\}$. There is a vertex $x$ such that $f\ra x \ra y \ra f$ in $D'$. Suppose that $b_i\neq x$. Note that the arcs $(f, x)$ and $(y, f)$ are in $D$. If $(x,y) \in D$ or is a convenient orientation then $y\in N^{++}(f)$. Otherwise, there is
a missing edge $rs$ that loses to $xy$, namely $s\ra y$ and $x\notin N^{++}(s)\cup N^+(s)$. But $fs$ is not a missing edge then we must have $(f,s)\in D$. Thus $y\in N^{++}(f)$. Suppose that $b_i=x$. Since $b_i\ra y$, $a_{i+1}\notin N^{++}(b_i)\cup N^+(b_i)$ and $a_{i+1}y$ is not a missing edge, then we must have $(y,a_{i+1})\in D$. Thus $f\ra a_{i+1}\ra y$ in $D$ and $y\in N^{++}(f)$. Hence $N^{++}_{D'}(f)\backslash \{b_{i+1}\} \sub N^{++}(f)$. Note that $J(f)=\{f\}$ in $D'$. Combining this with theorem \ref{prince}, we get $|N^{+}(f)| = |N^{+}_{D'}(f)|-1 \leq |N^{++}_{D'}(f)|-1 \leq |N^{++}(f)|$.
Now suppose that $f=a_k$. We reorient the missing edge $a_kb_k$ as $(b_k, a_k)$ and let $D''$ denote the new oriented graph. Then $L$ is a good median order of the good oriented graph $D''$,
$N^+_{D''}(f)=N^+(f)$, $J(f)=\{f\}$ in $D''$, and $f$ has the SNP in $D''$. Let $y\in N^{++}_{D''}(f)$. There is a vertex $x$ such that $f\ra x \ra y \ra f$ in $D''$. Note that the arcs $(f, x)$ and $(y, f)$ are in $D$. If $(x,y) \in D$ or is a convenient orientation then $y\in N^{++}(f)$. Otherwise, there is
a missing edge $rs$ that loses to $xy$, namely $s\ra y$ and $x\notin N^{++}(s)\cup N^+(s)$. But $fs$ is not a missing edge then we must have $(f,s)\in D$. Thus $y\in N^{++}(f)$ and $N^{++}_{D''}(f)\sub N^{++}(f)$.
Thus $f$ has the SNP in $D$.
Finally, suppose that $f=b_i$. We use the same argument of the case $f=a_k$ to prove that $f$ has the SNP in $D$.
\end{proof}

We note that our method guarantees that the vertex $f$ found with the SNP is a feed vertex of some digraph containing $D$.
This is not guaranteed by the proof presented in \cite{fidler}.
 Recall that $F$ is the set of the new arcs added to $D$ to obtain
the good oriented graph $D'$. So if $F=\phi$ then $D$ is a good oriented graph.

\begin{thm}
Let $D$ be an oriented graph missing a matching and suppose that $F=\phi$. If $D$ has no sink vertex
then it has at least two vertices with the SNP.
\end{thm}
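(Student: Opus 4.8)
The theorem to prove is: if $D$ is an oriented graph missing a matching with $F=\phi$ (so $D$ itself is good), and $D$ has no sink vertex, then $D$ has at least two vertices with the SNP. Since $F=\phi$, the dependency digraph $\Delta$ consists only of directed cycles, and by the lemmas in the excerpt each $K(C)$ is an interval of $D$ on which \emph{every} vertex has the SNP. This is the exact analogue, for this class, of the Havet--Thomassé result that a sink-free tournament has two SNP vertices, and the plan is to mimic their sedimentation argument.

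\smallskip

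**The plan.** The plan is to take a good median order $L$ of $D$ and feed vertex $f=x_n$, and use Theorem~\ref{prince} together with the sedimentation machinery (Lemma~\ref{gsedlem}) to produce a \emph{second} feed vertex with the SNP. First I would distinguish the two cases from the definition of $Sed(L)$. If $L$ is stable, then after finitely many sedimentation steps we reach an order $Sed^q(L)=y_1\cdots y_n$ with $\omega(N^+(y_n)\backslash J(y_n))<\omega(G\backslash J(y_n))$, a \emph{strict} inequality. Since each sedimentation step is a good weighted median order (Lemma~\ref{gsedlem}) with the \emph{same feed vertex block moved to the front}, the strict inequality gives slack: combining it with Theorem~\ref{prince} applied to $D[J(y_n)]$ (where every vertex has the SNP by Lemma~\ref{snp2}, since $J(y_n)$ is either a singleton or some $K(C)$) shows $y_n$ has the SNP, and because of the strict inequality the \emph{new} feed vertex produced after sedimentation is genuinely different from the original $f$. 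If $L$ is periodic, the sedimentation cycles through a finite list of feed vertices; I would argue that distinct vertices appear as feeds in this cycle, each of which has the SNP by Theorem~\ref{prince} and Lemma~\ref{snp2}.

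\smallskip

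**Where the sink-freeness enters.** The no-sink hypothesis is what forces a second vertex to appear rather than the order simply stabilizing at $f$ alone. Concretely, if $f=x_n$ were the \emph{only} SNP vertex, then in every sedimented order the feed block would have to return to $f$; sink-freeness ($d^+(v)\geq 1$ for all $v$) should rule out the degenerate configuration where sedimentation fixes a single block. I would show that either a strictly-improving (stable) step exhibits a new feed vertex distinct from $f$, or the periodic case produces at least two distinct feed vertices among $y_1\cdots y_n$, and translate the $J(\cdot)$-relative SNP back to a genuine SNP in $D$ via the "second part is obvious" mechanism of Theorem~\ref{prince} (namely $N^+(x)\backslash J(f)$ is dominated by $G_L\backslash J(f)$, and inside $J(f)$ every vertex has the SNP).

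\smallskip

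**Main obstacle.** The hard part will be the bookkeeping in the periodic case: showing that the cyclic sequence of feed vertices under repeated $Sed$ cannot consist of a single vertex (equivalently, that the block $J(f)$ and the bad/non-bad partition cannot conspire so that $f$ keeps returning as the unique feed). I expect the key is a weight/counting invariant: sedimentation permutes the vertices while preserving $\omega(L)$, so tracking which vertex lands last and using $d^+(v)\geq1$ everywhere should guarantee that a second distinct vertex occupies the feed position at some step, and that vertex has the SNP by the same Theorem~\ref{prince} argument. Handling the interaction between this global argument and the interval structure of the $K(C)$'s (so that moving a whole block $J(f)$ does not destroy the SNP of its internal vertices) is the delicate point, but Lemma~\ref{snp2} guarantees every vertex of each $K(C)$ has equal in/out/second-out degrees, which should make the block moves harmless.
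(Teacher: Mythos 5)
Your proposal has a genuine gap, and it stems from sedimenting the wrong order. You propose to iterate $Sed$ on the full order $L=x_1\cdots x_n$ and argue that the sequence of feed vertices that appears must contain two distinct vertices. This fails already in the stable case: if $\omega(N^+(f)\backslash J(f))<\omega(G_L\backslash J(f))$ holds at the very first step, then by definition $Sed(L)=L$, no new order is produced, and your process exhibits only the single feed vertex $f$ --- your claim that ``the strict inequality gives slack [so that] the new feed vertex produced after sedimentation is genuinely different from the original $f$'' has it backwards, since strict inequality is precisely the condition under which sedimentation does nothing. In the periodic case you give no mechanism forcing two distinct feeds to occur (and none exists in general when sedimenting $L$ itself: e.g.\ if every vertex outside $J(f)$ is bad, $sed(L)=b_1\cdots b_rJ(f)$ returns the same feed block to the end). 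Moreover, the no-sink hypothesis never actually enters your argument except as a hope that it ``should rule out the degenerate configuration.''

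The paper's proof (following Havet--Thomass\'e) instead first disposes of the case $J(x_n)=K(C)$, where Lemma~7 already gives $2k\geq 2$ vertices with the SNP, and otherwise sediments the \emph{truncated} order $L'=x_1\cdots x_{n-1}$ of $D[x_1,\ldots,x_{n-1}]$, i.e.\ with the known SNP vertex $x_n$ removed. The feed vertex $y=y_{n-1}$ of $Sed^q(L')$ is then a genuinely different vertex, and the bookkeeping that you identify as the ``main obstacle'' is resolved by the identity $|N^+(y)|=|N^+_{D[y_1,y_{n-1}]}(y)|+1$ (the $+1$ being the arc $y\ra x_n$ forced by the feedback property). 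In the stable case the strict inequality absorbs this $+1$; in the periodic case the no-sink hypothesis is used concretely: $x_n$ has an out-neighbor $x_j$, chosen with $j$ largest, which is never a feed of any $Sed^q(L')$ and hence eventually becomes a bad vertex of some $Sed^q(L')$, so that $x_j\in N^{++}(y)\backslash G_{Sed^q(L')}$ supplies the extra second out-neighbor needed to offset the $+1$. Without removing $x_n$ first and without this explicit use of $x_j$, your outline does not close.
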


\begin{proof}

Consider a good median order $L=x_1...x_n$ of $D$. If $J(x_n)=K(C)$ for some directed cycle $C$ of $\Delta$ then by lemma \ref{prince}
and lemma \ref{snp2} the result holds. Otherwise, $x_n$ is a whole vertex (i.e. $J(x_n)=\{x_n\}$). By lemma \ref{prince}, $x_n$ has the SNP in $D$.
So we need to find another vertex with SNP. Consider the good median order $L'=x_1...x_{n-1}$. Suppose first that $L'$
is stable. There is $q$ for which $Sed^q(L')=y_1...y_{n-1}$ and
$\mid N^+(y_{n-1})\backslash J(y_{n-1})\mid < \mid G_{Sed^q(L')}\backslash J(y_{n-1})\mid$.
Note that $y_1...y_{n-1}x_n$ is also a good median order of $D$.
By lemma \ref{prince} and lemma \ref{snp2}, $y:=y_{n-1}$ has the SNP in $D[y_1,y_{n-1}]$.
So $\mid N^+(y)\mid = \mid N^+_{D[y_1,y_{n-1}]}(y)\mid +1\leq \mid G_{Sed^q(L')}\mid \leq \mid N^{++}(y)\mid $.
Now suppose that $L'$ is periodic. Since $D$ has no sink then $x_n$ has an out-neighbor $x_j$. Choose $j$ to be the greatest (so that it is the last
vertex of its corresponding interval).
Note that for every $q$, $x_n$ is an out-neighbor of the feed vertex of $Sed^q(L')$.
So $x_j$ is not the feed vertex of any $Sed^q(L')$.
Since $L'$ is periodic, $x_j$ must be a bad vertex of $Sed^q(L')$ for some integer $q$, otherwise the index of $x_j$ would
always increase during the sedimentation process. Let $q$ be such an integer.
Set $Sed^q(L')=y_1...y_{n-1}$. Lemma \ref{snp2} and lemma \ref{prince} guarantee that the vertex $y:=y_{n-1}$ with the SNP in $D[y_1,y_{n-1}]$.
Note that $y\rightarrow x_n \rightarrow x_j$ and $G_{Sed^q(L')}\cup \{x_j\}\subseteq N^{++}(y)$.
So $\mid N^+(y)\mid = \mid N^+_{D[y_1,y_{n-1}]}(y) \mid +1 = \mid G_{Sed^q(L')} +1 \mid= \mid G_{Sed^q(L')}\cup \{x_j\} \mid
\leq \mid N^{++}(y) \mid$.

\end{proof}

\end{document}